\newtheorem{thm}{Theorem}[section]
\newtheorem{cor}[thm]{Corollary}
\newtheorem{prop}[thm]{Proposition}
\newtheorem{lem}[thm]{Lemma}
\newtheorem*{main}{Main Theorem}
\theoremstyle{definition}
\newtheorem{defi}[thm]{Definition}
\theoremstyle{remark}
\newcommand{\cf}{\textit{cf.\ }}
\newcommand{\bA}{\mathbb{A}}
\newcommand{\bC}{\mathbb{C}}
\newcommand{\bF}{\mathbb{F}}
\newcommand{\bN}{\mathbb{N}}
\newcommand{\bQ}{\mathbb{Q}}
\newcommand{\bR}{\mathbb{R}}
\newcommand{\bS}{\mathbb{S}}
\newcommand{\bZ}{\mathbb{Z}}
\newcommand{\cC}{\mathcal{C}}
\newcommand{\cD}{\mathcal{D}}
\newcommand{\cG}{\mathcal{G}}
\newcommand{\cL}{\mathcal{L}}
\newcommand{\cM}{\mathcal{M}}
\newcommand{\cN}{\mathcal{N}}
\newcommand{\cO}{\mathcal{O}}
\newcommand{\cP}{\mathcal{P}}
\newcommand{\cQ}{\mathcal{Q}}
\newcommand{\cR}{\mathcal{R}}
\newcommand{\fa}{\mathfrak{a}}
\newcommand{\fm}{\mathfrak{m}}
\newcommand{\fp}{\mathfrak{p}}
\newcommand{\fS}{\mathfrak{S}}
\newcommand{\fX}{\mathfrak{X}}
\newcommand{\sD}{\mathscr{D}}
\newcommand{\sG}{\mathscr{G}}
\newcommand{\sS}{\mathscr{S}}
\newcommand{\ol}{\overline}
\newcommand{\ul}{\underline}
\newcommand{\wt}{\widetilde}
\newcommand{\wh}{\widehat}
\newcommand{\loc}{\mathrm{loc}}
\newcommand{\alg}{\mathrm{alg}}
\newcommand{\Gm}{\mathbb{G}_{\mathrm{m}}}
\newcommand{\Mloc}{\mathrm{M}^{\loc}}
\newcommand{\isoarrow}{\xrightarrow\sim}
\DeclareMathOperator{\Spec}{Spec}
\DeclareMathOperator{\Spf}{Spf}
\DeclareMathOperator{\Hom}{Hom}
\DeclareMathOperator{\Isom}{Isom}
\DeclareMathOperator{\HermIsom}{HermIsom}
\DeclareMathOperator{\End}{End}
\DeclareMathOperator{\Stab}{Stab}
\DeclareMathOperator{\Ker}{Ker}
\DeclareMathOperator{\Res}{Res}
\DeclareMathOperator{\Nm}{Nm}
\DeclareMathOperator{\Tr}{Tr}
\DeclareMathOperator{\Lie}{Lie}
\DeclareMathOperator{\Sh}{Sh}
\DeclareMathOperator{\GL}{GL}
\DeclareMathOperator{\GSp}{GSp}
\DeclareMathOperator{\GU}{GU}
\DeclareMathOperator{\oU}{U}
\DeclareMathOperator{\Gr}{Gr}
\DeclareMathOperator{\Def}{Def}
\DeclareMathOperator{\diag}{diag}
\DeclareMathOperator{\inv}{inv}
\let\c@equation\c@thm
\numberwithin{equation}{section}
\title{The canonicity of the integral models of RSZ Shimura varieties}
\author{Yuta Nakayama}
\date{}
\begin{document}

\begin{abstract}
We show that the integral models of Shimura varieties of Rapoport, Smithling and Zhang in relation to variants of the arithmetic Gan--Gross--Prasad conjecture, the arithmetic fundamental lemma conjecture and the arithmetic transfer conjecture are canonical in the sense prescribed by Pappas. In particular, we prove that they are isomorphic to the models constructed by Kisin and Pappas.
\end{abstract}

\maketitle
\footnotetext{2020 \textit{Mathematics Subject Classification}.
 Primary: 11G18; Secondary: 14K10}
\setcounter{tocdepth}{1}
\tableofcontents

\section{Introduction} \label{sec:Intro}

The Shimura varieties have long been important in number theory because of their links to both arithmetic geometry and automorphic representation theory.
The Langlands program is a famous example of such links, and another emerging subject is the intersection theory of special cycles on these varieties and its relation to automorphic $L$-functions, starting from the Gross--Zagier formula \cite{GZ}.

In this direction, Rapoport, Smithling and Zhang \cite{RSZInt} have recently formulated a variant of the arithmetic Gan--Gross--Prasad conjecture and global analogues of the arithmetic fundamental lemma conjecture and the arithmetic transfer conjecture (\cf\cite[\S 27]{GGPConj}, \cite{ZhaAFLDef} and \cite{RSZATDef}).
One of the main parts of the formulation is their explicit moduli theoretic construction of semi-global and global integral models of certain Shimura varieties.
The sequel paper \cite{RSZExp} has vastly generalized the construction, and is the subject of our paper.
Their moduli problem is about pairs of abelian schemes with additional data unlike classical integral models in \cite{KotInt} and \cite{RZ}.

As the last two references show, the integral models of Shimura varieties are of arithmetic interest in their own right, and have been investigated by various people. Lately, Kisin and Pappas \cite{KPInt} constructed well-behaved semi-global integral models for a large class of Shimura varieties with parahoric level structure at $p$.
These models apparently depend on various choices, but Pappas \cite{PapInt} has introduced the notion of canonical integral models in a broader context than \cite{MilInt} and its analogues in the ramified case, and has showed that the models of Shimura varieties of Hodge type in \cite[4.2.1]{KPInt} are independent of choices by the fact that they are canonical.
However, such models are abstractly defined, not easy to study for some applications.

Our goal in this paper is to show that the RSZ models are also canonical.
This also implies that models in \cite[Theorem 5.4]{RSZExp} coincide with those in \cite[4.2.1]{KPInt}.
This paper has potential effects on future research in two directions.
First, since the appearance of \cite{PapInt}, there have been several papers which refer to some integral models being canonical, e.g., \cite[p.~501]{HPRRed}, \cite[p.~3]{ZacLoc} and \cite[p.~3]{PZInt}, but there are still few research on this canonicity.
Second, \cite{RSZInt} has posed the question of formulating a variant of the arithmetic Gan--Gross--Prasad conjecture using the Kisin--Pappas integral models.
Our paper suggests that this question is reasonable.

In fact, we could have done without the canonicity by showing that morphisms from the RSZ integral models to the integral models of the Siegel modular varieties in \S \ref{sec:LocUniv-MorKP} are proper and quasi-finite.
\cite[Lemma B.2.5, B.2.28]{QiuMod} in a paper that appeared later than this paper also show similar results to ours based on the explicit construction of the Kisin--Pappas integral models.
However, we adopt our method of using the canonicity in search for its other uses than showing that the Kisin--Pappas integral models for Hodge type Shimura varieties are independent of the choice of Hodge embeddings.

The condition for a system of models of Shimura varieties to be canonical consists mainly of two parts.
One is the extension property close to \cite[Definition 2.5]{MilInt}, which we cope with in a similar way to the exisiting method.
The other condition involves a notion relevant to a section rigid in the first order in \cite[Definition 3.31]{RZ}.
As a consequence, we use the deformation theory of abelian varieties and $p$-divisible groups.

With the above background in mind, we state our main result.
Let $F$ be a CM field, and $F_0$ be its maximal totally real subfield.
Set $\Phi$ to be a CM type of $F/F_0$.
Let $W$ be a nondegenerate $F/F_0$-hermitian space of positive dimension $n$.
For each element $\varphi$ of $\Phi$, the pair $(r_\varphi,r_{\ol \varphi})$ denotes the signature of the hermitian space $W\otimes_{F,\varphi} \bC$.
We then have the unitary group $G = \oU(W)$ over $F_0$ and the torus over $\bQ$ as follows:
\[
    Z^\bQ(R) = \{z\in\Res_{F/\bQ}\Gm(R)\mid \Nm_{F/F_0}(z)\in R^{\times}\},
\]
$R$ being a $\bQ$-algebra.
Set $\wt G$ to be the direct product of $Z^\bQ$ and $\Res_{F_0/\bQ} G$.
The PEL type Shimura variety $\Sh_{K_{\wt G}}(\wt G,h_{\wt G})$ attached to this group is considered mainly in this paper.
The reasons of adapting this $\wt G$ is in \cite[Remark 2.6]{RSZExp}.

In our case, we have a particular choice of connected parahoric group scheme $\wt\cG$ over $\bZ_p$ with generic fiber $\wt G$ and choose some groups $K_{\wt G}$ whose $p$ part is $\wt\cG(\bZ_p)$ and whose prime-to-$p$ part $K_{\wt G}^p$ is sufficiently small.
Then we have a flat semi-global integral model $\cM_{K_{\wt G}}$ of the above Shimura variety having the level $K_{\wt G}$.
\cite{RSZInt} and \cite{RSZExp} think of only one prime-to-$p$ level in terms of $Z^\bQ$ in their integral models, but here we generalize their auxiliary model to allow various levels so that we can discuss the canonicity of the resulting RSZ models.
We also have a local model $\Mloc$ attached by \cite{PZLoc} to $\wt\cG$ and the minuscule cocharacter of $\wt G$ coming from the Shimura datum.

As we vary $K_{\wt G}^p$, we obtain by construction a system of models $(\cM_{K_{\wt G}})_{K_{\wt G}^p}$ with transition morphisms for $K'^p_{\wt G}\subseteq K_{\wt G}^p$
\[
    \pi_{K'^p_{\wt G},K_{\wt G}^p}\colon \cM_{\wt\cG(\bZ_p)K'^p_{\wt G}}\to \cM_{\wt\cG(\bZ_p)K_{\wt G}^p}
\]
extending the transition morphisms on generic fibers.

\begin{main}[Theorem \ref{main}]
This system is canonical with respect to $(\wt\cG,\Mloc)$.
This means what follows among other things.
\begin{enumerate}
    \item The morphisms $\pi_{K'^p_{\wt G},K_{\wt G}^p}$ are finite etale.
    \item For a discrete valuation ring $R$ of mixed characteristic $(0,p)$, the map
    \[
        \varprojlim_{K_{\wt G}^p}\cM_{K_{\wt G}}(R)\to \varprojlim_{K_{\wt G}^p}\cM_{K_{\wt G}}\left(R\left[\frac{1}{p}\right]\right)
    \]
    is bijective.
    \item For each $K_{\wt G}^p$, there is a locally universal $(\wt\cG, \Mloc)$-display on the $p$-adic formal completion of $\cM_{K_{\wt G}}$ associated with the pro-etale local system on $\Sh_{K_{\wt G}}(\wt G,h_{\wt G})$ given by the covers
    \[
        \Sh_{K'_{\wt G,p}K_{\wt G}^p}(\wt G,h_{\wt G})\to \Sh_{K_{\wt G}}(\wt G,h_{\wt G}),
    \]
    $K'_{\wt G,p}$ ranging over open subgroups of $\wt\cG(\bZ_p)$.
    These $(\wt\cG, \Mloc)$-displays are compatible with respect to the base change by $\pi_{K'^p_{\wt G},K_{\wt G}^p}$.
\end{enumerate}
As a result, the system is isomorphic to the one introduced in \cite[4.2.1]{KPInt}.
\end{main}
We refer to the body of the paper and \cite{PapInt} for unexplained materials some of which are implicit here.

We make a remark about how the nature of RSZ models is reflected in our proof.
The construction of integral models involves delicate conditions, for instance, the refined spin condition \cite[\S 4.4]{RSZInt}, to make the models flat.
However, such conditions does not explictly appear in our proof.
Rather, we use the resulting flatness almost everywhere.
This flatness is necessary to obtain $(\wt\cG, \Mloc)$-displays, for example.

Finally, we turn to the organization of the paper.
In \S \ref{sec:Can}, we review the notion of canonical integral models of Shimura varieties following \cite{PapInt}.
In \S \ref{sec:Shi}, we explain more about the Shimura variety $\Sh_{K_{\wt G}}(\wt G,h_{\wt G})$.
In \S \ref{sec:Aux}, we discuss the auxiliary models that supplement the RSZ integral models.
In \S \ref{sec:Def}, we write down the integral models of Shimura varieties whose canonicity is showed in the rest of the paper.
We address the extension property of the models in \S \ref{sec:Ext} while we construct the locally universal $(\wt\cG, \Mloc)$-displays in \S \ref{sec:LocUniv}.

\subsection*{Acknowledgements}

It is a pleasure to thank my advisor N.~Imai for his academic support resuliting in this paper.
I thank G.~Pappas for answering my questions in emails.
I also thank the referee for suggesting that a direct argument here unlike the previous version of this paper could be possible.

\subsection*{Notation}

Set $p$ to be an odd prime.
If $K$ is a usual or $p$-adic number field, then $\cO_K$ is its ring of integers.
We fix an embedding $\ol\bQ\to \ol{\bQ_p}$.
For a number field $K$, let $\bA_K$, $\bA_{K,f}$ and $\bA_{K,f}^p$ denote the adele of $K$, finite adele of $K$ and finite adele of $K$ away from $p$.
In the three symbols, $K$ is omitted if $K = \bQ$.
For an abelian group $\Lambda$, we put $\wh\Lambda = \displaystyle\varprojlim_{0\neq n\in\bN}\Lambda/n\Lambda$ and $\wh\Lambda^p = \displaystyle\varprojlim_{p\nmid n\in\bN}\Lambda/n\Lambda$.
Phrases like ``over $\Spec A$'' and equations like $X\times_{\Spec A}\Spec B$ are abbreviated as ``over $A$'' and $X\otimes_{A} B$, or simply $X_B$, for example.
If $A$ is an abelian scheme over a scheme $S$, then its $p$-adic Tate module, prime-to-$p$ Tate module and full Tate module are denoted respectively by $T_p(A)$, $T^p(A)$ and $\wh{T}(A)$.
The conventions for $p$-divisible groups are analogous.
Their rational versions are $V_p(A)$, $V^p(A)$ and $\wh{V}(A)$.
The dual abelian scheme of $A$ is written $A^\vee$.
If there is an $\cO_K$-action (resp. an $\cO_{K,(p)}$-action, a $K$-action) $i$ on $A$ for a CM field $K$, the default action of these rings on $A^\vee$ is through $i(\ol a)^\vee$ for each $a$ in these rings.
If $A$ is an abelian scheme or a $p$-divisible group over a scheme $S$, then its Lie algebra is denoted by $\Lie A$.
The notation $\pi_1$ means the etale fundamental groups whereas $\pi_1^{\alg}$ stands for the algebraic fundamental groups of algebraic groups.
The notation $\Sh$ is not for Shimura varieties over $\bC$ but for canonical models.
The moduli schemes and the moduli functors are denoted by the same symbols.
For a finite set $X$, set $\#X$ to be the number of elements of it.

\section{Overview of canonical integral models}\label{sec:Can}

The description below is specific to our case.

\subsection{Notation} \label{sec:Can-Not}

The notation added here is different from that of other subsequent sections.
Let $(G, X)$ be a Shimura datum.
Especially, $G$ is connected.
We assume
\begin{itemize}
    \item that $G$ splits over a tamely ramified extension of $\bQ_p$,
    \item that $p\nmid \#\pi_1^{\alg}(G)$, and
    \item that the center of $G$ has the same $\bQ$-split rank as the $\bR$-split rank.
\end{itemize}
Set $\mu$ to be a minuscule cocharacter of $G_{\ol{\bQ_p}}$ coming from an element of $X$.
Let $E$ be the reflex field of $(G,X)$.
Set $\nu$ to be its $p$-adic place induced by the fixed embedding $\ol\bQ\to\ol{\bQ_p}$.
Let $\fp$ be the maximal ideal of $\cO_{E_\nu}$.
Set $X_\mu$ to be the projective smooth variety over $E$ to which $G_{\ol{\bQ_p}}/P_\mu$ descends.
Here, $P_\mu$ means the parabolic subgroup of $G_{\ol{\bQ_p}}$ corresponding to $\mu$ \cite[Theorem 25.1]{MilAG}.

We take a connected parahoric group scheme $\cG$ associated with a point in the extended Bruhat--Tits building of $G(\bQ_p)$.
In \cite{PZLoc}, Pappas and Zhu construct a local model $\Mloc$ with respect to $\cG$ and the conjugacy class of $\mu$ under our assumption on the splitting of $G$.

We assume that $(\cG, \Mloc)$ is of strongly integral local Hodge type \cite[3.1.4]{PapInt}, i.e., there is a closed immersion $\iota\colon\cG\to\GL_{n,\bZ_p}$ such that
\begin{enumerate}
    \item $\iota(\cG)$ includes the center of $\GL_{n,\bZ_p}$,
    \item $\iota\mu$ is conjugate over $\ol{\bQ_p}$ to the standard minuscule cocharacter sending $a$ to $\diag(a^{(d)}, 1^{(n-d)})$ for some $0\leq d\leq n$, and
    \item the Zariski closure in $\Gr(d,n)_{\cO_{E_\nu}}$ of the image of $\iota_*\colon X_{\mu,E_\nu}\to\Gr(d,n)_{E_\nu}$ induced by $\iota$ is isomorphic to $\Mloc$ including the $\cG$-action.
\end{enumerate}
Such an $\iota$ is called a strongly integral local Hodge embedding.
The notation $\iota_*$ also stands for the embedding $\Mloc\to\Gr(d,n)_{\cO_{E_\nu}}$ in the last condition and its other variants.

\subsection{Dieudonn\'e $(\cG, \Mloc)$-displays} \label{sec:Can-Dis}

Let $R$ be a complete noetherian local flat $\cO_{E_{\nu}}$-algebra with perfect residue field of characteristic $p$.
Recall from \cite[\S 1, \S 2]{ZinCFT}, etc., that in this case we have a variant $\hat{W}(R)$ of the ring of Witt vectors equipped with a Frobenius $\varphi$\footnote{Not to be confused with elements of the CM type.}.
We also recall the functor of \cite[Proposition 4.1.2]{PapInt}.
It is from the groupoid of pairs $(\cP,q)$ of a $\cG$-torsor $\cP$ over $\hat W(R)$ and a $\cG$-equivariant morphism $q\colon\cP\otimes_{\hat W(R)}R\to \Mloc$ of $\cO_{E_\nu}$-schemes.
It lands in the groupoid of tuples $(\cQ', \cQ,\alpha)$ of two $\cG$-torsors $\cQ'$, $\cQ$ over $\hat W(R)$ and a $\cG$-equivariant morphism $\alpha\colon\cQ[1/p]\to\varphi^*\cQ'[1/p]$.
In addition, the functor sends a pair $(\cP, q)$ to a tuple of the form $(\cP,\cQ, \alpha)$.

\begin{defi}[{\cite[Definition 4.3.2]{PapInt}}]
A \emph{Dieudonn\'e $(\cG, \Mloc)$-display} over $R$ is a tuple of
\begin{itemize}
    \item a $\cG$-torsor $\cP$ over $\hat W(R)$,
    \item a $\cG$-equivariant morphism $q\colon\cP\otimes_{\hat W(R)}R\to \Mloc$ and
    \item a $\cG$-isomorphism $\Psi\colon\cQ\isoarrow\cP$, where $\cQ$ is the second constituent of the image of $(\cP,q)$ by the above functor.
\end{itemize}
\end{defi}

We turn to $(\cG, \Mloc)$-displays.
Let $R_0$ be a $p$-adically complete flat $\cO_{E_\nu}$-algebra formally of finite type.
The above functor sending $(\cP, q)$ to $(\cP, \cQ, \alpha)$ is analogously defined when $\hat W(R)$ is replaced by $W(R_0)$.
\begin{defi}[{\cite[Definition 4.2.2]{PapInt}}]
A \emph{$(\cG, \Mloc)$-display} over $R_0$ is a tuple of
\begin{itemize}
    \item a $\cG$-torsor $\cP$ over $W(R_0)$,
    \item a $\cG$-equivariant morphism $q\colon\cP\otimes_{W(R_0)}R_0\to \Mloc$ and
    \item a $\cG$-isomorphism $\Psi\colon\cQ\isoarrow\cP$, where $\cQ$ is the second constituent of the image of $(\cP,q)$ by the above functor.
\end{itemize}
\end{defi}

This definition is generalized in \cite[4.2.4]{PapInt} for a $p$-adic flat formal scheme $\fX$ over $\Spf\cO_{E_\nu}$ formally of finite type by substituting a sheaf of rings $W(\cO_\fX)$ for $W(R_0)$.

\subsection{Rigidity and locally universal displays} \label{sec:Can-Rig}

Assume that the local ring $(R,\fm, k)$ in \S \ref{sec:Can-Dis} is normal and formally of finite type over $W(k_0)$ for some perfect field $k_0$ of characteristic $p$.
Set $(\cP, q,\Psi)$ to be a Dieudonn\'e $(\cG, \Mloc)$-display over $R$.

We refer to \cite[Definition 4.5.8]{PapInt} for the definition of a section $s\in\cP(\hat{W}(R))$ being rigid in the first order at $\fm$.
\begin{defi}[{\cite[Definition 4.5.10]{PapInt}}]
The Dieudonn\'e $(\cG, \Mloc)$-display $(\cP, q,\Psi)$ is called \emph{locally universal} if there exists a section $s\in \cP(\hat W(R))$ rigid in the first order such that $q\circ (s\otimes 1)\colon \Spec R\to \cP\otimes_{\hat W(R)}R\to \Mloc\otimes_{\bZ_p}W(k)$ induces an isomorphism from the completion of the local ring at the image of $\fm$ to $R$.
\end{defi}

In \cite[\S 4.4]{PapInt}, a classical Dieudonn\'e display $\cD(\iota)$ is attached to the tuple $\cD = (\cP, q,\Psi)$, which in turn gives a $p$-divisible group $\sG$ by the main theorem of \cite{ZinCFT}.

\subsection{Canonical integral models}
Let $K^p$ be a sufficiently small compact open subgroup of $G(\bA_f^p)$.
Put $K_p = \cG(\bZ_p)$ and $K = K_pK^p\subseteq G(\bA_f)$.

Let $\sS_K$ be a normal flat separated $\cO_{E,(\nu)}$-schemes of finite type with generic fiber $\Sh_K(G,X)$.
Let $\cL_K$ be the pro-etale $\cG(\bZ_p)$-local system on $\Sh_K(G,X)$ that is given by the covers $(\Sh_{K'_pK^p}(G,X)\to\Sh_{\cG(\bZ_p)K^p}(G,X))_{K'_p\subseteq\cG(\bZ_p)}$.
Suppose that we are given a Dieudonn\'e $(\cG, \Mloc)$-display $\sD_x$ over the strict completion $R_x$ of the local ring of $\sS_K$ at each $x\in\sS_K(\ol{\bF_p})$.
As in \S \ref{sec:Can-Rig}, we associate a $p$-divisible group $\sG_x$ over $R_x$ with $\sD_x$.

We cite \cite[Definition 6.1.5]{PapInt} for the notion that $\cL_K$ and $\sD_x$ are associated, and for the definition of $(\cL_K, (\sD_x, \alpha_x)_{x\in\sS_K(\ol{\bF_p}))})$ being an associated system.
Here, $\alpha_x$ is an isomorphism
\[
    T_p\left(\sG_x\otimes_{R_x} R_x\left[\frac{1}{p}\right]\right)\isoarrow\cL_{K,R_x\left[\frac{1}{p}\right]}\times^{\cG}\bA_{\bZ_p}^n
\]
of pro-etale $\bZ_p$-local systems on $R_x[1/p]$, where the push-out on the right hand side is via $\iota$.
These definitions do not depend on $\iota$ by \cite[Proposition 6.5.1]{PapInt}.

\begin{thm}[{\cite[Theorem 6.4.1]{PapInt}}]\label{exist}
Suppose that there exists a $p$-divisible group on $\sS_K$ whose pull-back over $\Sh_K(G,X)$ has the $p$-adic Tate module isomorphic to $\cL_K\times^{\cG}\bA_{\bZ_p}^n$.
Then, there exists a unique associated system $(\cL_K, (\sD_x, \alpha_x)_{x\in\sS_K(\ol{\bF_p}))})$ up to unique isomorphism.
\end{thm}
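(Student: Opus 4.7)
The plan is to construct each Dieudonn\'e $(\cG,\Mloc)$-display $\sD_x$ pointwise from the given $p$-divisible group $\sG$ by applying Zink's equivalence and then performing a Tannakian reduction of structure group from $\GL_n$ down to $\cG$. Fix a strongly integral local Hodge embedding $\iota\colon\cG\hookrightarrow\GL_{n,\bZ_p}$; by standard Tannakian arguments $\cG$ is cut out inside $\GL_{n,\bZ_p}$ by a finite family of tensors $(s_\alpha)\subset\bZ_p^{n,\otimes}$. For each $x\in\sS_K(\ol{\bF_p})$, pull back $\sG$ along $\Spf R_x\to\sS_K$ to get $\sG_x$, and apply the main theorem of \cite{ZinCFT} to produce a classical Dieudonn\'e display. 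Reinterpreting this via the frame $\hat W(R_x)$ and pushing forward by $\iota$ yields the $\GL_n$-version of the data $(\cP^{\GL}_x,q^{\GL}_x,\Psi^{\GL}_x)$, with $q^{\GL}_x$ landing in $\Gr(d,n)_{R_x}$.

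The heart of the argument, and the main obstacle, is to reduce the structure group of $(\cP^{\GL}_x,\Psi^{\GL}_x)$ from $\GL_n$ to $\cG$. On the generic fiber, the hypothesis $T_p(\sG|_{\Sh_K(G,X)})\isoarrow\cL_K\times^\cG\bA_{\bZ_p}^n$ provides a $\cG$-reduction of the \'etale frame bundle; via the crystalline--\'etale comparison over $R_x[1/p]$, the tensors $s_\alpha$ transfer to Frobenius-invariant tensors on $\cP^{\GL}_x\otimes_{\hat W(R_x)}\hat W(R_x)[1/p]$ compatible with $\Psi^{\GL}_x$. The key step is to show that these rational tensors extend to sections of the tensor algebra of $\cP^{\GL}_x$ over $\hat W(R_x)$ itself. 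I would handle this in the spirit of Kisin's integral extension of tensors in the Hodge-type setting \cite{KPInt}: normality of $R_x$, flatness over $\cO_{E_\nu}$, and the formal properties of $\hat W(-)$ reduce the problem to integrality at the closed point, which is guaranteed by taking as base point the section coming from $x$ and using that $\iota$ is strongly integral.

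The resulting integral tensors determine a $\cG$-torsor $\cP_x\hookrightarrow\cP^{\GL}_x$ on which $\Psi^{\GL}_x$ restricts to a $\cG$-isomorphism $\Psi_x$. The morphism $q^{\GL}_x$ restricts to a $\cG$-equivariant map $\cP_x\otimes_{\hat W(R_x)}R_x\to\Gr(d,n)_{R_x}$, which on the generic fiber lands in $X_{\mu,E_\nu}$ by compatibility with the Shimura cocharacter, and hence factors through $\Mloc$, since $R_x$ is normal and flat over $\cO_{E_\nu}$ and $\Mloc$ is by construction the Zariski closure of $X_{\mu,E_\nu}$ in $\Gr(d,n)_{\cO_{E_\nu}}$; this produces $q_x$. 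The identifications $\alpha_x$ are supplied directly by the hypothesis on Tate modules, and the association requirement of \cite[Definition 6.1.5]{PapInt} is then tautological from the construction. Uniqueness of the associated system up to unique isomorphism follows because an integral $\cG$-structure extending a prescribed rational one is unique by normality of $R_x$.
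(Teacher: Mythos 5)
This statement is not proved in the paper at all: it is Theorem~6.4.1 of Pappas's \emph{On integral models of Shimura varieties} and is simply cited verbatim in \S\ref{sec:Can} as part of the review of the formalism of canonical integral models. There is therefore no in-paper proof against which your attempt can be matched, and writing a proof here is beside the point --- the paper's task is only to verify the \emph{hypotheses} of this theorem for the RSZ situation, which happens later, in \S\ref{sec:LocUniv}.

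Judged on its own as a reconstruction of Pappas's argument, your sketch captures the broad shape (Zink's equivalence to produce the $\GL_n$-level Dieudonn\'e display, Tannakian tensors cutting out $\cG$, transfer of tensors via \'etale--crystalline comparison, reduction of structure group, factoring $q_x$ through $\Mloc$ by flatness and normality). But the crucial step --- extending the rational Frobenius-compatible tensors on $\cP^{\GL}_x\otimes_{\hat W(R_x)}\hat W(R_x)[1/p]$ to integral tensors over $\hat W(R_x)$ --- is not justified by the vague appeal to ``normality, flatness, and formal properties of $\hat W(-)$.'' This is precisely the technical heart of the Kisin/Kisin--Pappas/Pappas machinery, and it requires the full force of the integral $p$-adic Hodge theory comparison, Faltings-type strictness, and the assumption that $\iota$ is a \emph{strongly} integral local Hodge embedding (condition~(3) of \S\ref{sec:Can-Not}), not merely a closed immersion; strong integrality enters in a specific way to control the specialization at the closed point and the identification with $\Mloc$, which your sketch invokes only as a slogan. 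Similarly, the uniqueness clause of the theorem is about the whole associated system, including the data $\alpha_x$ and their compatibility with $\cL_K$ across points and levels, not merely the uniqueness of a $\cG$-reduction over a fixed $R_x$; the one-line appeal to normality does not cover this. Since none of this belongs to the paper under review, the appropriate response here would have been to note that the statement is a citation, not to supply a proof.
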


We also consider a situation where we have one $(\cG, \Mloc)$-display instead of pointwise Dieudonn\'e $(\cG, \Mloc)$-displays $\sD_x$.
\begin{defi}[{\cite[Definition 6.2.1, Definition 6.2.2]{PapInt}}]
Assume that $\sD_K$ is a $(\cG, \Mloc)$-display over the $p$-adic formal completion $\fS_K$ of $\sS_K$.
Then $\sD_K$ is \emph{associated with $\cL_K$} and \emph{locally universal} over $\fS_K$ if for each $x\in\sS_K(\ol{\bF_p})$, there is a locally universal Dieudonn\'e $(\cG, \Mloc)$-display $\sD_x$ over $R_x$ associted with $\cL_K$ such that the $(\cG, \Mloc)$-display $\sD_x\otimes_{\hat W(R_x)} W(R_x)$ is isomorphic to $\sD_K\otimes_{W(\cO_{\fS_K})} W(R_x)$.
\end{defi}

We slightly modify \cite[Definition 7.1.3]{PapInt} in response to our moduli problems.

\begin{defi} \label{candef}
Let $(\sS_K)_{K^p}$ be a system of $\cO_{E,(\nu)}$-schemes as above for $K^p$ running over a cofinal subset of the set of compact open subgroups of $G(\bA_f^p)$, together with morphisms
\[
    \pi_{K'^p,K^p}\colon\sS_{K_pK'^p}\to \sS_{K_pK^p}
\]
for $K'^p\subseteq K^p$ extending $\Sh_{K_pK'^p}(G,X)\to \Sh_{K_pK^p}(G,X)$.
Then the system is called \emph{canonical} if
\begin{enumerate}
    \item for each $K'^p\subseteq K^p$, the morphism $\pi_{K'^p,K^p}$ is finite etale,
    \item\label{extension} for any discrete valuation ring $R$ of mixed characteristic $(0,p)$, the map
    \[
        \varprojlim_{K^p}\sS_K(R)\to \varprojlim_{K^p}\sS_K\left(R\left[\frac{1}{p}\right]\right)
    \]
    is bijective, and
    \item\label{strongdisplay} for each $K^p$, there is a locally universal $(\cG, \Mloc)$-display $\sD_K$ on the $p$-adic completion $\fS_K$ associated with $\cL_K$.
    These $(\cG, \Mloc)$-displays are compatible with respect to the base change by $\pi_{K'^p,K^p}$.
\end{enumerate}
\end{defi}

As an example, if $(G,X)$ is of Hodge type, then the integral models of $\Sh_K(G,X)$ by Kisin and Papas \cite[4.2.1]{KPInt} are canonical \cite[Theorem 8.2.1]{PapInt}.

\section{The Shimura variety}\label{sec:Shi}

We review some of \cite[\S 2, \S 3]{RSZExp}.

\subsection{Notation}
The convention here is the same as in \S \ref{sec:Intro}, and valid through the rest of the paper.
The same is true for the symbols in the upcoming sections.
Let $F$ be a CM field with $F_0$ its maximal totally real subfield.
Set $\Phi$ to be a CM type of $F/F_0$.
We fix a totally negative element $\Delta$ of $F_0$ and its square root $\sqrt{\Delta}$ such that an embedding $\varphi$ of $F$ in $\bC$ is in $\Phi$ if and only if $\varphi(\sqrt{\Delta})\sqrt{-1}^{-1} >0$.

Let $W$ be a nondegenerate $F/F_0$-hermitian space of positive dimension $n$.
For each $\varphi\in\Phi$, the pair $(r_\varphi,r_{\ol \varphi})$ denotes the signature of the hermitian space $W\otimes_{F,\varphi} \bC$.
Set $G$ to be the unitary group $\oU(W)$ over $F_0$.
We also define a group scheme $Z^\bQ$ over $\bZ$, $G^\bQ$ over $\bQ$ and $\wt G$ over $\bQ$ as
\begin{align*}
    Z^\bQ(R) &\colonequals \{z\in\Res_{\cO_F/\bZ}\Gm(R)\mid \Nm_{\cO_F/\cO_{F_0}}(z)\in R^{\times}\},\\
    G^\bQ(R) &\colonequals \{g\in\Res_{F_0/\bQ}\GU (W)(R)\mid c(g)\in R^{\times}\},\\
	\wt G &\colonequals Z^\bQ\times_{\Gm}G^\bQ,
\end{align*}
where $R$ is a ring (resp. a $\bQ$-algebra) and $c$ means the similitude.

The groups $\wt G$ and $Z^\bQ\times\Res_{F_0/\bQ} G$ are isomorphic by sending $(z,g)$ to $(z,z^{-1}g)$.
We also note that $\wt G$ satisfies the last two among the three assumptions imposed on $G$ in \S \ref{sec:Can-Not}.
We assume that the first first assumption also holds for $\wt G$.
This is true in the case of \cite[\S 4.1, \S 4.4]{RSZInt}.

Let $\bS = \Res _{\bC/\bR} \Gm$ be the Deligne torus.
If $h\colon\bS\to H$ is a morphism of algebraic groups over $\bR$, then $\{h\}$ denotes its conjugacy class.

\subsection{The Shimura data}

For $Z^\bQ$, we identify $\bR\otimes_{\bQ} F$ with $\displaystyle\prod_{\varphi\in\Phi} \bC$ through elements of $\Phi$, getting an isomorphism between $(Z^\bQ)_\bR$ and a subgroup of $\displaystyle\prod_{\varphi\in\Phi}\bS$.
We let $h_{Z^\bQ}\colon\bS\to (Z^\bQ)_\bR$ send $z$ in $\bC^{\times}$ to $(z)_{\varphi\in\Phi}$.
The pair $(Z^\bQ, \{h_{Z^\bQ}\})$ is a Shimura datum.
Let $E_\Phi$ be its reflex field, or the reflex field of $\Phi$.

We next turn to $G^\bQ$.
Identify $W\otimes_{F,\varphi} \bC$ with $\bC^n$ so that the hermitian form on $W\otimes_{F,\varphi} \bC$ is represented by $\diag(1^{(r_\varphi)},-1^{(r_{\ol\varphi})})$.
Then, we have $h_{G^\bQ}\colon \bS\to (G^\bQ)_\bR$, where $h_{G^\bQ}(\bR)$ is induced by an $\bR$-algebra homomorphism $\bC\to\displaystyle\prod_{\varphi\in\Phi} \End(\bC^n)$ sending $\sqrt{-1}$ to $(\sqrt{-1}\diag(1^{(r_\varphi)},-1^{(r_{\ol\varphi})}))_{\varphi}$.
The pair $(G^\bQ,\{h_{G^\bQ}\})$ is a Shimura datum.

We finally come to $\wt G$.
Let $h_{\wt G}$ denote the map $h_{Z^\bQ}\times_{\Gm} h_{G^\bQ}\colon\bS\to \wt G_\bR$.
The pair $(\wt G,\{h_{\wt G}\})$ is a Shimura datum.
Set $E\subseteq \bC$ to be its reflex field.
The Shimura datum is of PEL type, but we just describe its Hodge embedding and its integral version in \S \ref{sec:Def-Emb}.

\section{Auxiliary models}\label{sec:Aux}
Set $\nu$ to be the $p$-adic place of $E$ induced by the fixed embedding $\ol\bQ\to \ol{\bQ_p}$.
Let $\fa\neq 0$ be an ideal of $\cO_{F_0}$ prime to $p$ such that the following category $\cM_0^{\fa}$ fibered in groupoids over the category of locally noetherian $\cO_E$-schemes is nontrivial.
Such $\fa$ exists by \cite[Remark 3.7(iii)]{RSZExp}.

Viewed as a pseudo-functor, $\cM_0^\fa$ associates with an object $S$ of the source category the groupoid of tuples $(A_0,i_0,\lambda_0)$ of
\begin{itemize}
    \item an abelian scheme $A_0$ over $S$,
    \item $i_0\colon \cO_F\to \End_S(A_0)$ with the Kottwitz condition that for every $a\in\cO_F$, the characteristic polynomial of $i_0(a)$ acting on $\Lie A_0$ is $\displaystyle\prod_{\varphi\in\Phi}(T-\varphi(a))$, and
    \item $\lambda_0\colon A_0\to A_0^\vee$ is an $\cO_F$-linear polarization with kernel $A_0[\fa]$.
\end{itemize}
Arrows of this groupoid are isomorphisms of abelian schemes preserving the other structures.
This moduli problem is represented by a finite etale $\cO_E$-scheme by applying the argument of \cite[Proposition 3.1.2]{HowCyc}.

We briefly recall $\cM_0^{\fa,\xi}$ in \cite[(4.2)]{RSZExp}.
Via the first homology group, $\cM_0^\fa(\bC)$ bijects to the set $\cL_\Phi^\fa$ of the isomorphism classes of the following pairs $(\Xi_0,\langle *,*\rangle_0)$.
This pair comprises an invertible $\cO_F$-module $\Xi_0$ and an alternating $\cO_F/\cO_{F_0}$-balanced form $\langle *,*\rangle_0\colon\Xi_0\times\Xi_0\to \bZ$ such that $\langle\sqrt{\Delta}*,*\rangle_0$ is positive definite and the dual of $\Xi_0$ with respect to $\langle *,*\rangle_{0,\bQ}$ is $\fa^{-1}\Xi_0$.
Elements $\Xi_0,\Xi'_0\in\cL_\Phi^\fa$ are called equivalent if there are $\cO_F$-linear isomorphisms $\Xi_0\otimes_\bZ \wh\bZ\isoarrow\Xi'_0\otimes_\bZ \wh\bZ$ and $\Xi_0\otimes_\bZ \bQ\isoarrow\Xi'_0\otimes_\bZ\bQ$ by which alternating forms correspond up to a constant in $\wh\bZ^{\times}$ (resp. $\bQ^{\times}$).
The equivalence relation is denoted by $\sim$.
If $\xi$ is in $\cL_\Phi^\fa/\mathord{\sim}$, then $\cM_0^{\fa,\xi}$ is the Zariski closure in $\cM_0^\fa$ of the set of the image of $x$ in $\cM_0^\fa$ where $x\in \cM_0^\fa(\bC)$ corresponds to an element of $\xi$.
This $\cM_0^{\fa,\xi}$ is clopen in $\cM_0^\fa$.

Let $(\Lambda_{0},\psi_0)$ be an element of $\cL_\Phi^\fa$.
Put $K_{Z^\bQ} = Z^\bQ(\bZ_p)K_{Z^\bQ}^p$ for an open subgroup $K_{Z^\bQ}^p$ of $Z^\bQ(\wh\bZ^p)$.
If $S$ is a locally noetherian $\cO_{E,(p)}$-scheme and $(A_0,i_0,\lambda_0)$ is an object of $\cM_0^\fa(S)$, then let $\ul{\Isom}((\wh{\Lambda_0}^p)_S,T^p(A_0))$ be the pro-etale sheaf of $\wh{\cO_F}^p$-linear isomorphisms from the constant sheaf $(\wh{\Lambda_0}^p)_S$ over $S$ to the pro-etale sheaf $T^p(A_0)$ transferring $\psi_0$ to the Riemann form of $\lambda_0$ up to a constant in $\wh \bZ^{p,\times}$.

We define the following category $\cM_{0,K_{Z^\bQ}}^{\Lambda_0}$ fibered in groupoids over the category of locally noetherian $\cO_{E,(p)}$-schemes.
As a pseudo-functor, a locally noetherian $\cO_{E,(p)}$-scheme $S$ is sent to the groupoid of tuples $(A_0,i_0, \lambda_0, \epsilon^p)$ composed of
\begin{itemize}
    \item an object $(A_0,i_0, \lambda_0)$ of $\cM_0^\fa(S)$ and
    \item a section $\epsilon^p$ over $S$ of the pro-etale sheaf
    \[
        \ul{\Isom}((\wh{\Lambda_0}^p)_S,T^p(A_0))/K_{Z^\bQ}^p.
    \]
\end{itemize}
Arrows of this groupoid are the ones of $\cM_0^\fa$ by which the level structures coincide.
Note that $K_{Z^\bQ}^p$ acts on $\ul{\Isom}((\wh{\Lambda_0}^p)_S,T^p(A_0))$ by the action on $\wh{\Lambda_0}^p$.
Also, if $S$ is connected with a geometric point $s\to S$, then $\epsilon^p$ can be identified with a $K_{Z^\bQ}^p$-class of $\wh{\cO_F}^p$-linear isomorphisms $\wh{\Lambda_0}^p\isoarrow T^p(A_{0,s})$ transferring $\psi_0$ to the Riemann form of $\lambda_0$ up to a constant in $\wh\bZ^{p,\times}$ and the $\pi_1(S,s)$-action on the target to actions on the source by elements of $K_{Z^\bQ}^p$.

Toward the representability of the auxiliary models, we first observe in the sequel that $\cM_{0,Z^\bQ(\wh\bZ)}^{\Lambda_0}$ gives a fibered full subcategory of $\cM_0^\fa\otimes_\bZ \bZ_{(p)}$.
Indeed, the level structure with respect to $Z^{\bQ}(\wh\bZ)$ attached to $(A_0,i_0,\lambda_0)$ in $\cM_0^\fa(S)$ is unique if exists since the group of automorphisms of $\wh{\Lambda_0}^p$ preserving the $\wh{\cO_F}^p$-action and $\psi_0$ up to a scalar is $Z^\bQ(\wh\bZ^p)$.
We identify $\cM_{0,Z^\bQ(\wh\bZ)}^{\Lambda_0}$ with its essential image in $\cM_0^\fa\otimes_\bZ \bZ_{(p)}$.

We also relate $\cM_{0,Z^{\bQ}(\wh\bZ)}^{\Lambda_0}$ to $\cM_0^{\fa,\xi}$.
Set $\xi$ to be the element $[(\Lambda_0,\psi_0)]$ of $\cL_\Phi^\fa/\mathord{\sim}$.
Then we claim that $\cM_{0,Z^{\bQ}(\wh\bZ)}^{\Lambda_0}$ contains $\cM_0^{\fa,\xi}\otimes_{\bZ}\bZ_{(p)}$.
In fact, we have only to construct a level structure on $(A_0,i_0,\lambda_0)\in\cM_0^{\fa,\xi}(S)$ for each connected locally noetherian $\cO_{E,(p)}$-scheme $S$.
If $S$ has a point with residue field of characteristic $0$, then $S$ has a geometric point $s$ such that the corresponding element of $\cM_0^{\fa,\xi}(k(s))$ factors through a $\bC$-valued point $t$ of $\cM_0^{\fa,\xi}$.
We can think of a complex torus $A_0(\bC)$ via $t$.
Its first homology group over $\wh\bZ$ is isometric to $\wh{\Lambda_0}$ up to a similitude in $\wh\bZ^{\times}$, and that translates into the level structure $\wh{\Lambda_0}^p\isoarrow T^p(A_{0,s})$.
We can deal with the other case by taking a lift to characteristic $0$, relying on the finite etaleness of $\cM_0^{\fa,\xi}$.
We obtain a decomposition
\[
    \cM_{0,Z^\bQ(\wh\bZ)}^{\Lambda_0} = \bigsqcup\cM_{0}^{\fa,[\Xi_0]},
\]
where the disjoint sum ranges over $[\Xi_0]\in\cL_\Phi^\fa/\mathord{\sim}$ such that $\wh{\Xi_0}^p$ is isometric to $\wh{\Lambda_0}^p$ up to a similitude in $(\wh{\bZ}^p)^{\times}$.
In particular, the left hand side is a finite etale scheme over $\Spec_{\cO_{E,(p)}}$.

\begin{prop} \label{auxiliary finet}
$\cM_{0,K_{Z^\bQ}}^{\Lambda_0}$ is represented by a scheme finite etale over $\cM_{0,Z^\bQ(\wh\bZ)}^{\Lambda_0}$.
\end{prop}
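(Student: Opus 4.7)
The plan is to exhibit a forgetful morphism $f\colon \cM_{0,K_{Z^\bQ}}^{\Lambda_0}\to \cM_{0,Z^\bQ(\wh\bZ)}^{\Lambda_0}$ and show that it is representable and finite etale. Since the target is already known to be a finite etale $\cO_{E,(p)}$-scheme by the decomposition displayed just before the proposition, this will immediately give representability of the source together with the asserted finite etaleness over $\cO_{E,(p)}$.

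First I would construct $f$. Given a tuple $(A_0,i_0,\lambda_0,\epsilon^p)\in \cM_{0,K_{Z^\bQ}}^{\Lambda_0}(S)$, the image of $\epsilon^p$ under the further projection
\[
    \ul{\Isom}((\wh{\Lambda_0}^p)_S,T^p(A_0))/K_{Z^\bQ}^p\to \ul{\Isom}((\wh{\Lambda_0}^p)_S,T^p(A_0))/Z^\bQ(\wh\bZ^p)
\]
is a $Z^\bQ(\wh\bZ^p)$-level structure, and by the uniqueness noted in the preceding discussion this datum is already equivalent to the underlying object of $\cM_{0,Z^\bQ(\wh\bZ)}^{\Lambda_0}(S)$. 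This defines $f$ on objects and morphisms.

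Next I would identify the fibers of $f$. For a morphism $T\to \cM_{0,Z^\bQ(\wh\bZ)}^{\Lambda_0}$ classifying $(A_0,i_0,\lambda_0)$, a $T'/T$-point of the fiber product is the same as a section over $T'$ of the pro-etale sheaf $\ul{\Isom}((\wh{\Lambda_0}^p)_{T'},T^p(A_{0,T'}))/K_{Z^\bQ}^p$. The existence of the $Z^\bQ(\wh\bZ^p)$-level structure exactly says that the pro-etale sheaf $\ul{\Isom}((\wh{\Lambda_0}^p)_T,T^p(A_0))$ is a torsor under the profinite group $Z^\bQ(\wh\bZ^p)$, acting on $\wh{\Lambda_0}^p$ through its description as the group of $\wh{\cO_F}^p$-linear automorphisms preserving $\psi_0$ up to a scalar in $\wh\bZ^{p,\times}$. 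Quotienting this torsor by the open (hence finite index) subgroup $K_{Z^\bQ}^p$ yields a pro-etale $Z^\bQ(\wh\bZ^p)/K_{Z^\bQ}^p$-torsor on $T$, which is representable by a finite etale $T$-scheme by standard representability results for finite torsors under constant group schemes.

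The step I expect to require the most care is ensuring that $\ul{\Isom}((\wh{\Lambda_0}^p)_T,T^p(A_0))$ is indeed a torsor under $Z^\bQ(\wh\bZ^p)$ rather than just under a subgroup. This relies on the existence of a section locally in the pro-etale topology, which in turn reduces, by the argument already used to compare $\cM_{0,Z^\bQ(\wh\bZ)}^{\Lambda_0}$ with the components $\cM_0^{\fa,[\Xi_0]}$, to producing a level structure after pulling back to a lift in characteristic $0$ and invoking the finite etaleness of $\cM_0^{\fa,\xi}$; the key compatibility is that the similitude is allowed to range over all of $\wh\bZ^{p,\times}$, which is exactly what makes the stabilizer group the full $Z^\bQ(\wh\bZ^p)$.
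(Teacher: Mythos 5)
Your argument is correct and takes a genuinely more abstract route than the paper's. Where you recast the level-structure sheaf $\ul{\Isom}((\wh{\Lambda_0}^p)_T,T^p(A_0))$ as a pro-etale $Z^\bQ(\wh\bZ^p)$-torsor and quotient by $K_{Z^\bQ}^p$ to obtain a torsor under the finite group $Z^\bQ(\wh\bZ^p)/K_{Z^\bQ}^p$, the paper proceeds concretely via the Galois description of finite etale covers: over a connected base $S$ with geometric point $s$ it uses $\epsilon^p$ to produce a homomorphism $g\colon\pi_1(S,s)\to Z^\bQ(\wh\bZ^p)$ and exhibits the fiber product as a disjoint union of covers $S'\to S$ corresponding to the $\pi_1(S,s)$-sets $\pi_1(S,s)/g^{-1}(K_{Z^\bQ}^p)$, indexed by double cosets in $g(\pi_1(S,s))\backslash Z^\bQ(\wh\bZ^p)/K_{Z^\bQ}^p$. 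The two are equivalent under the Galois correspondence; yours is cleaner, the paper's more explicit and self-contained.

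Two points are worth making explicit in your version. First, the quotient $\cP/K_{Z^\bQ}^p$ of a $Z^\bQ(\wh\bZ^p)$-torsor $\cP$ is itself a $Z^\bQ(\wh\bZ^p)/K_{Z^\bQ}^p$-torsor only because $K_{Z^\bQ}^p$ is normal in $Z^\bQ(\wh\bZ^p)$; this holds since $Z^\bQ$ is commutative, which is precisely what the paper invokes (``$\epsilon'^{-1}g\epsilon'=g$ since $Z^\bQ$ is commutative''). Alternatively, you can bypass the torsor structure on the quotient altogether and simply observe that the quotient is a finite locally constant pro-etale sheaf, hence representable by a finite etale scheme, with no appeal to normality. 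Second, the worry raised in your last paragraph --- that the Isom sheaf might only be a torsor under a proper subgroup --- is already dispatched by the observation in the paper just before the proposition: the group of $\wh{\cO_F}^p$-linear automorphisms of $\wh{\Lambda_0}^p$ preserving $\psi_0$ up to a scalar in $(\wh\bZ^p)^\times$ is exactly $Z^\bQ(\wh\bZ^p)$, so once nonemptiness is established (which the $Z^\bQ(\wh\bZ)$-level structure over $\cM_{0,Z^\bQ(\wh\bZ)}^{\Lambda_0}$ guarantees), the simply transitive $Z^\bQ(\wh\bZ^p)$-action is automatic; there is no need to separately argue about which subgroup acts.
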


\begin{proof}
Let $S$ be a connected locally noetherian $\cO_{E,(p)}$ with an object $(A_0,i_0,\lambda_0,\epsilon^p)$ of $\cM_{0,Z^\bQ(\wh\bZ)}^{\Lambda_0}(S)$.
We fix a representative $\wh{\Lambda_0}^p\to T^p(A_{0,s})$ of $\epsilon^p$ denoted by the same symbol.
Set $\cN$ to be the fiber product of $\cM_{0,K_{Z^\bQ}}^{\Lambda_0}$ and $S$ over $\cM_{0,Z^\bQ(\wh\bZ)}^{\Lambda_0}$.
Then, $\cN$ can be seen as a set-valued functor from the category of locally noetherian $S$-schemes, sending an $S$-scheme $T$ to the inverse image of $\epsilon^p_T$ by the map
\[
    (\ul{\Isom}((\wh{\Lambda_0}^p)_S,T^p(A_0))/K_{Z^\bQ}^p)(T)\to (\ul{\Isom}((\wh{\Lambda_0}^p)_S,T^p(A_0))/Z^\bQ(\wh\bZ^p)))(T).
\]

For a given morphism $f\colon T\to S$ of connected locally noetherian schemes and a geometric point $t$ of $T$, we would like to find out the set of $\epsilon'\in Z^{\bQ}(\wh\bZ^p)/K_{Z^\bQ}^p$ with the condition below.
Let $g\colon\pi_1(S,t)\to Z^\bQ(\wh\bZ^p)$ be induced by $\epsilon^p$.
The condition is that $f_*\colon\pi_1(T,t)\to \pi_1(S,t)$ composed with $\epsilon'^{-1}g\epsilon'$ factors through $K_{Z^{\bQ}}^p$.
Although $\epsilon'^{-1}g\epsilon' =g$ holds since $Z^\bQ$ is commutative, writing $\epsilon'$ explicitly clarifies the proof.

Set $G_{\epsilon'}$ to be the inverse image of $K_{Z^\bQ}^p$ by $\epsilon'^{-1}g\epsilon'$.
Then $\pi_1(S,t)/G_{\epsilon'}$ is a $\pi_1(S,t)$-set in correspondence with a finite etale cover $S'\to S$.
Lifts of $f$ to $T\to S'$ are in bijection with $zG_{\epsilon'}\in\pi_1(S,t)/G_{\epsilon'}$ fixed by $f_*(\pi_1(T,t))$.
For $z\in\pi_1(S,t)$, the stabilizer of $zG_{\epsilon'}\in\pi_1(S,t)/G_{\epsilon'}$ in $\pi_1(S,t)$ is $zG_{\epsilon'}z^{-1} = G_{g(z)\epsilon'}$.
Thus, $\cN$ is represented by the disjoint sum of $S'$ over $\epsilon'\in g(\pi_1(S,t))\backslash Z^\bQ(\wh\bZ^p)/K_{Z^\bQ}^p$.
\end{proof}

Finally, the following description \cite[\S 5]{KotInt} of $\Sh_{K_{Z^\bQ}}(Z^\bQ, \{h_{Z^\bQ}\})$ and the comparison of $\bC$-valued points \cite[Theorem 8.17]{MilShi} show that the generic fiber of
\[
    \cM_0 = \cM_{0,K_{Z^\bQ}}^{\Lambda_0}\times_{\cM_0^\fa} \cM_{0}^{\fa,[\Lambda_0]}
\]
is $\Sh_{K_{Z^\bQ}}(Z^\bQ, \{h_{Z^\bQ}\})\otimes_{E_\Phi}E$.
The scheme $\Sh_{K_{Z^\bQ}}(Z^\bQ, \{h_{Z^\bQ}\})$ is a clopen subscheme of the $E_\Phi$-scheme representing a functor from the category of locally noetherian $E_\Phi$-schemes to that of groupoids.
The functor sends such a scheme $S$ to the groupoid of tuples $(A_0,i_0,\lambda_0,\epsilon)$, where
\begin{itemize}
    \item $A_0$ is an abelian scheme over $S$,
    \item $i_0\colon F\to\End_S(A_0)\otimes_\bZ \bQ$ satisfies the Kottwitz condition with respect to $\Phi$ as in \cite[\S 5]{KotInt},
    \item $\lambda_0\in\Hom(A_0,A_0^\vee)\otimes_\bZ \bQ$ is an $F$-linear $\bQ$-polarization, and
    \item $\epsilon\in(\ul{\Isom}(\Lambda_0\otimes_\bZ \bA_f,\wh V(A_0))/K_{Z^\bQ})(S)$,
\end{itemize}
with arrows comprising $F$-linear $\bQ$-isogenies of abelian schemes preserving the level structures and sending polarizations with each other up to a factor in $\bQ_S^\times(S)$.
This description of $\Sh_{K_{Z^\bQ}}(Z^\bQ, \{h_{Z^\bQ}\})$ works even when the level at $p$ is not $Z^{\bQ}(\bZ_p)$.

\section{RSZ integral models}\label{sec:Def}

Our formulation is based on \cite[\S 4.1, \S 5.2]{RSZExp}.

\subsection{Notation}

For each $p$-adic place $w$ of $F$, we fix a uniformizer $\pi_w$ of $F_w$.
We take them so that if a place $v$ of $F_0$ splits into $w$ and $\ol w$ in $F$, then $\pi_w = \pi_{\ol w}$ as elements of $F_{0,v}$.
For a $p$-adic place $v$ of $F_0$, we let $\pi_v$ be $\pi_w$ if $w$ is the unique place of $F$ above $v$, and an element of $F_v$ sent to $(\pi_w, \pi_{\ol w})$ in $F_w\times F_{\ol w}$ if $v$ splits into $w$ and $\ol w$ in $F$.

Let $(*,*)_0\colon\Lambda_0\times\Lambda_0\to\cO_F$ be the hermitian form such that $\psi_0 = \Tr_{\cO_F/\bZ}\sqrt{\Delta}^{-1} (*,*)_0$.
Likewise, we view $W$ also as a symplectic space via $\psi = \Tr_{F/\bQ}\sqrt{\Delta}^{-1}(*,*)$, the form $(*,*)$ being the hermitian one of $W$.
We take an $\cO_F$-lattice $\Lambda$ of $W$ such that for each $p$-adic place $v$ of $F_0$, the inclusion $\Lambda_v\subseteq \Lambda_v^\vee\subseteq \pi_v^{-1}\Lambda_v$ holds, the dual being the symplectic one.
Regarding the levels, set $K_G^p$ to be a sufficiently small open compact subgroup of $G(\bA_f^p)$.
We introduce the following groups:
\begin{align*}
    K_{G,v} &\colonequals \Stab_{G(F_{0,v})}(\Lambda_v), \\
    K_{G,p} &\colonequals \prod_{v\mid p}K_{G,v}, \\
    K_G &\colonequals K_{G,p}K_G^p, \\
    K_{\wt G} &\colonequals K_{Z^\bQ}K_G\subseteq \wt G(\bA_f).
\end{align*}
We set $\wt\cG$ to be the subgroup $K_{Z^\bQ,p}K_{G,p}$ of $\wt G(\bQ_p)$, seen as a connected parahoric group scheme associated with a point in the extended Bruhat--Tits building of $\wt G(\bQ_p)$.
Finally, we define $L = \Hom_{\cO_F}(\Lambda_0,\Lambda)$.
The $F$-vector space $V = L_{\bQ}$ admits a hermitian form $V\times V\to F$ such that for each $m,m'$ in $\Lambda_0$ and $x,x'$ in $L$,
\[
    (x,x')(m,m')_0 = (xm,x'm'),
\]
where all the forms are hermitian.

\subsection{A Hodge embedding and its integral variant} \label{sec:Def-Emb}

We introduce an algebraic group $\GSp$ over $\bZ_{(p)}$ by the following:
\[
    \GSp(R)\colonequals \{g\in  \GL_R(\Lambda_{0,R}\oplus \Lambda_R)\mid \psi_0(g*,g*) + \psi(g*,g*)\in R^\times (\psi_0 + \psi)\}
\]
for a $\bZ_{(p)}$-algebra $R$.
Also, $S^\pm$ denotes the corresponding Siegel double space.

The group $\wt G$ is embedded into $\GSp_\bQ$ since each component of an element of $\wt G$ preserves $\psi_0$ or $\psi$.
This gives a Hodge embedding $(\wt G,\{h_{\wt G}\})\to (\GSp, S^\pm)$ since $(\psi_0 + \psi)(*, h_{\wt G}(*))$ is negative definite.

We next discuss the strongly integral local Hodge embedding.
We think of $\GL(\Lambda_{0, \bZ_p}\oplus\Lambda_{\bZ_p})$ as an algebraic group over $\bZ_p$ identified with $\GL_{[F:\bQ](n+1),\bZ_p}$.
Set $\iota\colon\wt\cG\to\GL(\Lambda_{0, \bZ_p}\oplus\Lambda_{\bZ_p})$ to be the inclusion.
Over $\bQ_p$, this $\iota$ is the same as the Hodge embedding composed with the inclusion $\GSp_{\bQ_p}\to \GL (\Lambda_{0,\bQ_p}\oplus \Lambda_{\bQ_p})$.
The pair $(\wt\cG,\Mloc)$ is of strongly integral local Hodge type by $\iota$, as we now explain.
The first two conditions on $\iota$ in \S \ref{sec:Can-Not} are straightforward.
The last one is by \cite[Proposition 2.3.7]{KPInt}.

\subsection{Naive models}\label{sec:Def-Nai}

For two abelian schemes $A_0$ and $A$ over a scheme $S$ with $\cO_{F,(p)}$-actions up to prime-to-$p$ isogeny, we define $T^p(A_0,A)$ and $V^p(A_0,A)$ to be the pro-etale sheaf $\Hom_{\wh{\cO_F}^p}(T^p(A_0), T^p(A))$ and $\Hom_{\bA_{F,f}^p}(V^p(A_0), V^p(A))$, respectively.
If $A_0$ and $A$ are endowed with $\cO_{F,(p)}$-linear $\bQ$-polarizations $\lambda_0$ and $\lambda$ respectively, then we have a hermitian form on the sheaf $V^p(A_0,A)$ sending $x,x'\in V^p(A_0,A)$ to $\lambda_0^{-1}\circ x'^\vee\circ\lambda\circ x\in\End_S(V^p(A_0))\isoarrow (\bA_{F,f}^p)_S(S)$.

We define the category $\cM_{K_{\wt G}}^{\mathrm{naive}}$ fibered in groupoids over the category of locally noetherian $\cO_{E,(p)}$-schemes.
It associates with each such scheme $S$ the groupoid of tuples $(A_0,i_0,\lambda_0,\epsilon^p,A,i,\lambda,\eta^p)$, where
\begin{itemize}
    \item $(A_0,i_0,\lambda_0,\epsilon^p)\in\cM_0(S)$,
    \item $A$ is an abelian scheme over $S$,
    \item $i\colon\cO_F\to\End_S(A)$ satisfies another Kottwitz condition that for every $a\in\cO_F$, the characteristic polynomial of $i(a)$ acting on $\Lie A$ is $\displaystyle\prod_{\varphi\in\Hom(F,\ol\bQ)}(T-\varphi(a))^{r_\varphi}$,
    \item $\lambda\in\Hom_{\cO_F}(A,A^\vee)\otimes_{\bZ}\bZ_{(p)}$ is a $\bQ$-polarization, and
    \item $\eta^p$ is a section over $S$ of the pro-etale sheaf \[
    \ul{\HermIsom}((\wh L^p)_S,T^p(A_0,A))/K_G^p.
    \]
    Here, $\ul{\HermIsom}((\wh L^p)_S,T^p(A_0,A))$ is the pro-etale sheaf of $\wh{\cO_F}^p$-linear isomorphisms between the two sheaves preserving the hermitian forms on $L\otimes_{\bZ}\bA_f^p$ and $V^p(A_0,A)$.
\end{itemize}
The tuple is further required to satisfy the two conditions below.
\begin{itemize}
    \item We have a decomposition with respect to $p$-adic places of $F_0$
    \[
        A[p^\infty] = \prod_{v\mid p}A[v^\infty],
    \]
    where the quotient $\cO_{F_{0,v}}$ of $\cO_{F_0}\otimes_{\bZ}\bZ_p$ acts on $A[v^\infty]$ through $i$.
    For each $p$-adic place $v$ of $F_0$, the kernel of $\lambda_v\colon A[v^\infty]\to A^{\vee}[v^\infty]$ induced by $\lambda$ should have the rank $\#(\Lambda_v^\vee/\Lambda_v)$ and should be included in $A[\pi_v] = \Ker (i(\pi_v)\colon A\to A)$.
    \item For every $p$-adic place $v$ of $F_0$ nonsplit in $F$ and each geometric point $s\to S$, \cite[Appendix A]{RSZInt} associates the sign invariant
    \[ \inv_v^r(A_{0,s},i_{0,s},\lambda_{0,s},A_s,i_s,\lambda_s)
    \]
    in $\{\pm 1\}$. This should equal
    \[
        (-1)^{n(n-1)/2}\det V_v
    \]
    in $F_{0,v}^\times/\Nm F_v^\times\simeq\{\pm 1\}$.
    Here, the determinant is the one of a matrix representation of the hermitian form of $V_v$.
\end{itemize}
The latter condition is called the sign condition.
Arrows of this groupoid are pairs consisting of an arrow in $\cM_0$ and an isomorphism of the other abelian schemes preserving the other data.
We note that $\eta^p$ admits a similar identification to $\epsilon^p$ described in terms of stalks.

We compare this moduli with its analogue described by isogenies.
The naive model in \cite{RSZExp} means this isogeny version.
Let $\cR_{K_{\wt G}}^{\mathrm{naive}}$ be the category fibered in groupoids over the category of locally noetherian $\cO_{E,(p)}$-schemes which sends such a scheme $S$ to the groupoid of tuples $(A_0,i_0,\lambda_0,\epsilon^p,A,i,\lambda,\eta^p)$ of
\begin{itemize}
    \item $A_0$, $i_0$, $\lambda_0$, $\epsilon^p$ and $A$ as before,
    \item $i\colon\cO_{F,(p)}\to\End_S(A)\otimes_{\bZ}\bZ_{(p)}$ with the Kottwitz condition,
    \item a $\bQ$-polarization $\lambda\in\Hom_{\cO_F}(A,A^\vee)\otimes_{\bZ}\bZ_{(p)}$, and
    \item a section $\eta^p$ over $S$ of the pro-etale sheaf \[
    \ul{\HermIsom}((L\otimes_{\bZ}\bA_f^p)_S,V^p(A_0,A))/K_G^p
    \]
    with essentially the same definition of $\ul{\HermIsom}$ as its counterpart.
\end{itemize}
The tuple is requested to satisfy the two conditions as before.
Here, the sign invariant at a geometric point $s\to S$ is also defined when the involved actions are up to isogeny.
Moreover, the invariant only depends on the isogeny class of $(A_{0,s},i_{0,s},\lambda_{0,s},A_s,i_s,\lambda_s)$, and is locally constant with respect to $s$.

Arrows of this groupoid are composed of pairs of an arrow in $\cM_0(S)$ and an $\cO_{F,(p)}$-linear $\bZ_{(p)}^\times$-isogeny between the other abelian schemes which preserves the other data.

\begin{lem}[\cf{\cite[Corollary 1.3.5.4]{LanCpt}}]\label{lem:CptSub}
Set $S$ to be a connected locally noetherian scheme of residual characteristic $0$ or $p$ with a geometric point $s\to S$.
Set $A$ to be an abelian scheme over $S$ with an action $i\colon \cO_{F,(p)}\to\End_S(A)\otimes_\bZ\bZ_{(p)}$.

Then the isomorphism classes of $\cO_{F,(p)}$-linear $\bZ_{(p)}^\times$-isogenies from $A$ to different targets consisting of an abelian scheme over $S$ with an $\cO_{F,(p)}$-action are in bijection with $\pi_1(S,s)$-invariant open compact subgroups of $V^p(A_s)$.
This is by sending the class of $f\in \Hom_{\cO_{F,(p)}}(A,A')\otimes_{\bZ}\bZ_{(p)}$ to $V^p(f)^{-1}(T^p(A'_s))$.

Moreover, the $\cO_{F,(p)}$-action of $A'$ is derived from an action $\cO_F\to \End_S(A')$ if and only if the corresponding subgroup of $V^p(A_s)$ is $\cO_F$-invariant.
\end{lem}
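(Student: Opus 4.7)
The plan is to reduce to the classical correspondence between prime-to-$p$ isogenies of abelian schemes and $\pi_1(S,s)$-invariant finite subgroups of the geometric fiber, and then handle the moreover part via a direct divisibility argument. For the forward direction, any $f \in \Hom_{\cO_{F,(p)}}(A,A')\otimes_{\bZ}\bZ_{(p)}$ that becomes invertible over $\bQ$ induces an $\bA_{F,f}^p$-linear isomorphism $V^p(f)$, so $T' := V^p(f)^{-1}(T^p(A'_s))$ is an open compact $\pi_1(S,s)$-invariant subgroup of $V^p(A_s)$ depending only on the isomorphism class of $(A',f)$. The $\cO_{F,(p)}$-linearity condition is essentially automatic here: any $\bZ_{(p)}^\times$-isogeny induces $\End_S(A)\otimes_\bZ\bZ_{(p)} \isoarrow \End_S(A')\otimes_\bZ\bZ_{(p)}$ by $\psi \mapsto f\psi f^{-1}$, and one equips $A'$ with the unique transported action $i' := f i f^{-1}$ making $f$ $\cO_{F,(p)}$-linear.

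For the inverse, given $T'$ I would pick $N \in \bZ_{>0}$ prime to $p$ with $NT' \supseteq T^p(A_s)$---possible since $T'$ is open---so that $NT'/T^p(A_s)$ becomes a finite $\pi_1(S,s)$-stable subgroup of the prime-to-$p$ torsion $V^p(A_s)/T^p(A_s) \cong \bigoplus_{\ell \ne p} A_s[\ell^\infty]$. The standard equivalence between finite etale closed subgroup schemes of $A[n]$ over the connected locally noetherian base $S$ and $\pi_1(S,s)$-invariant subgroups of $A_s[n]$---applicable because $A[n]$ is finite etale for $n$ prime to $p$---then produces a unique finite etale subgroup scheme $K \subseteq A$ with $K_s$ matching $NT'/T^p(A_s)$. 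Take $A' := A/K$ with the inherited $\cO_{F,(p)}$-action, let $g\colon A \to A'$ be the quotient, and set $f := Ng$. A direct scaling computation yields $V^p(f)^{-1}(T^p(A'_s)) = N^{-1}\cdot NT' = T'$. The mutual inverse property then follows because a prime-to-$p$ isogeny is determined up to a unique isomorphism of the target by its kernel.

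For the moreover part, write $i'(a) = n^{-1}\psi$ with $\psi \in \End_S(A')$ and $n \in \bZ$ prime to $p$: this element lies in $\End_S(A')$ iff $\psi \in n \cdot \End_S(A')$, equivalently iff $\psi$ vanishes on $A'[n]$, equivalently iff the induced $(n^{-1}\psi)_*$ preserves $T^p(A'_s)$. Hence $i'|_{\cO_F}$ factors through $\End_S(A')$ exactly when $T^p(A'_s)$ is $\cO_F$-stable under $i'$, and transporting along the $\cO_F$-linear isomorphism $V^p(f)$ translates this into $\cO_F$-stability of $T'$. The main technical point throughout is the translation between $\pi_1(S,s)$-invariant finite subgroups of the prime-to-$p$ torsion and finite etale closed subgroup schemes of $A$, a classical instance of Galois theory for etale covers; once it and the scaling to the case $T'' \supseteq T^p(A_s)$ have been arranged, the remaining steps are routine bookkeeping.
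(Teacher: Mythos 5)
Your proof is correct, and since the paper itself offers no proof of this lemma (it is stated with a ``\cf'' citation to Lan's Corollary 1.3.5.4 and left unproved), your write-up is filling in a reference rather than deviating from an in-paper argument. Your strategy---reduce the fractional $\bZ_{(p)}^\times$-isogeny to an honest prime-to-$p$ quotient by scaling so that $NT' \supseteq T^p(A_s)$, invoke the equivalence between finite \'etale subgroup schemes of $A[n]$ over the connected base and $\pi_1(S,s)$-stable subgroups of $A_s[n]$ (valid because $n$ is prime to the residual characteristics $0$ and $p$), and then treat the ``moreover'' part by the divisibility criterion $\psi \in n\cdot\End_S(A') \Leftrightarrow \psi|_{A'[n]}=0 \Leftrightarrow (n^{-1}\psi)_*$ preserves $T^p(A'_s)$---is exactly the standard line of reasoning for this kind of statement, and is indeed what Lan's proof amounts to. The steps you leave implicit are genuinely routine: the independence of $T'$ from the chosen isomorphism class, the injectivity of the correspondence (two isogenies with the same lattice differ by $\varphi=f_2f_1^{-1}$, which one checks is an honest isomorphism by the same scaling trick plus the \'etale rigidity), and the passage from the fiberwise vanishing of $\psi$ on $A'_s[n]$ to vanishing on the scheme $A'[n]$, which again uses finite \'etaleness over a connected base. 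One small precision worth adding: in the forward direction, the $\cO_{F,(p)}$-action $i'$ on $A'$ is \emph{part of the data} in the lemma, but $\cO_{F,(p)}$-linearity of $f$ forces $i'=fif^{-1}$, which is what makes it harmless to treat the action as transported; stating this explicitly would remove any ambiguity about why no $\cO_F$-stability is imposed on $T'$ in the main bijection.
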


\begin{prop}
The forgetful functor $\cM_{K_{\wt G}}^{\mathrm{naive}}\to\cR_{K_{\wt G}}^{\mathrm{naive}}$ is an equivalence of categories fibered in groupoids.
\end{prop}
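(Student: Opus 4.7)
The plan is to verify the three conditions of an equivalence of fibered categories: faithfulness, fullness, and essential surjectivity. Throughout I will use that $\Lambda_0$ is an invertible $\cO_F$-module, so evaluation yields an isomorphism $T^p(A_0,A_s)\otimes_{\wh{\cO_F}^p}T^p(A_{0,s})\isoarrow T^p(A_s)$ at each geometric point $s$, and similarly for rational Tate modules.

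Faithfulness is immediate, since an arrow in $\cM_{K_{\wt G}}^{\mathrm{naive}}$ is determined by its underlying pair $(\alpha_0,\alpha)$ of morphisms on $(A_0,A)$. For fullness, let $(\alpha_0,\alpha)$ be an arrow in $\cR_{K_{\wt G}}^{\mathrm{naive}}(S)$ between two objects coming from $\cM_{K_{\wt G}}^{\mathrm{naive}}(S)$; I need $\alpha$ to be an isomorphism of abelian schemes. Now $\alpha_0$ is already an isomorphism by the definition of arrows in $\cM_0$, and the prime-to-$p$ isogeny $\alpha$ induces an isomorphism on $p$-divisible groups. Its compatibility with $\eta^p$ and $\eta'^p$, both landing in the integral sheaf $T^p(A_0,-)$ modulo $K_G^p$, combined with the above evaluation isomorphism, forces $T^p(\alpha)\colon T^p(A)\to T^p(A')$ to be surjective and hence an isomorphism, so that $\alpha$ is itself an isomorphism preserving the remaining structures.

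For essential surjectivity, given an object $\mathbf x=(A_0,i_0,\lambda_0,\epsilon^p,A,i,\lambda,\eta^p)\in\cR_{K_{\wt G}}^{\mathrm{naive}}(S)$, I would reduce to the case of connected $S$ with a geometric point $s$, pick a representative $\tilde\eta^p_s\colon L\otimes\bA_f^p\isoarrow V^p(A_0,A_s)$ of the level structure, and form the $\wh{\cO_F}^p$-submodule $M_s\subseteq V^p(A_s)$ defined as the image of $\tilde\eta^p_s(\wh L^p)\otimes T^p(A_{0,s})$ under evaluation. Then $M_s$ is an open compact $\cO_F$-stable subgroup of $V^p(A_s)$. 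Since the Galois action on $\tilde\eta^p_s$ is a continuous cocycle into the compact group $K_G^p$, and the set of $\wh{\cO_F}^p$-lattices in $L\otimes\bA_f^p$ is discrete, I choose the representative $\tilde\eta^p_s$ so that $M_s$ is also $\pi_1(S,s)$-stable. Lemma \ref{lem:CptSub} then produces an abelian scheme $A'$, an $\cO_{F,(p)}$-linear $\bZ_{(p)}^\times$-isogeny $\alpha\colon A\to A'$ with $T^p(A'_s)=M_s$, and an $\cO_F$-action on $A'$ extending the $\cO_{F,(p)}$-action. Transporting $\lambda$ and $\eta^p$ across $\alpha$, the polarization kernel condition is preserved because $\alpha$ is prime-to-$p$ hence an isomorphism on $p$-divisible groups, the sign condition persists since it is locally constant and depends only on the isogeny class, and $\eta^p$ now lands in $T^p(A_0,A')$. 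This yields an object of $\cM_{K_{\wt G}}^{\mathrm{naive}}(S)$ lifting $\mathbf x$.

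The main anticipated obstacle is the existence of a representative $\tilde\eta^p_s$ making $M_s$ $\pi_1(S,s)$-stable: the $K_G^p$-orbit of $\wh L^p$ among $\wh{\cO_F}^p$-lattices must be analyzed using compactness of $K_G^p$, and one may need to invoke that the quotient by $K_G^p$ in the definition of $\ul{\HermIsom}((\wh L^p)_S,T^p(A_0,A))/K_G^p$ implicitly uses $K_G^p\cap\Stab(\wh L^p)$. A secondary check is that the construction is well-defined on different representatives and descends from connected components back to $S$, both of which should follow from the uniqueness assertion in Lemma \ref{lem:CptSub}.
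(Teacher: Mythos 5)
Your approach is essentially the same as the paper's: reduce to connected $S$ with a geometric point $s$; for essential surjectivity, extract from (a representative of) $\eta^p$ the lattice $\eta^p(\wh L^p)(T^p(A_{0,s}))\subseteq V^p(A_s)$ and invoke Lemma~\ref{lem:CptSub}; transport $\lambda$ and $\eta^p$ to the new abelian scheme and check the various conditions persist; for fullness, observe that compatibility with the integral level structures forces $V^p(f)(T^p(A_s))=T^p(A'_s)$, so the $\bZ_{(p)}^\times$-isogeny is an honest isomorphism.

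One remark on the point you flag as the main anticipated obstacle. The correct resolution is not to \emph{choose} a representative $\tilde\eta^p_s$ making $M_s$ stable: if $K_G^p\not\subseteq\Stab(\wh L^p)$ no such choice need exist, and your heuristic (continuity of the cocycle, discreteness of the lattice set) does not by itself produce one. What saves the argument is exactly your secondary observation: the quotient $\ul{\HermIsom}((\wh L^p)_S,T^p(A_0,A))/K_G^p$ in the definition of $\cM_{K_{\wt G}}^{\mathrm{naive}}$ only makes sense when $K_G^p$ stabilizes $\wh L^p$ (equivalently one quotients by $K_G^p\cap\Stab(\wh L^p)$), and once this is granted the lattice $\eta^p(\wh L^p)(T^p(A_{0,s}))$ is independent of the representative and automatically $\pi_1(S,s)$-stable, so Lemma~\ref{lem:CptSub} applies directly. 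The paper also leaves this implicit; your instinct to flag it is sound, but the fix should be stated as a standing hypothesis on $K_G^p$ rather than as a choice of representative.
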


\begin{proof}
We show the equivalence of the $S$-valued points, mimicking \cite[Proposition 1.4.3.4]{LanCpt}.
We may assume that $S$ is connected.
We take its geometric point $s$.
We first address the essential surjectivity.
Take an object $(A_0,i_0,\lambda_0,\epsilon^p,A,i,\lambda,\eta^p)$ of $\cR_{K_{\wt G}}^{\mathrm{naive}}$.
By Lemma \ref{lem:CptSub}, there is an abelian scheme $A'$ over $S$, an action $\cO_F\to\End_S(A')$ and an $\cO_{F,(p)}$-linear $\bZ_{(p)}^\times$-isogeny $f\in \Hom(A,A')\otimes_{\bZ}\bZ_{(p)}$ such that
\[
    V^p(f)^{-1} (T^p(A'_s)) = \eta^p(\wh L^p)(T^p(A_{0,s})).
\]
We also obtain a $\bQ$-polarization $\lambda'$ of $A'$ from $\lambda$.
The Kottwitz condition and the sign condition is still valid with $A'$ in place of $A$ and the condition on $\Ker\lambda'_{v}$ holds again.
We are left with $\eta^p$.
We fix a representative $L\otimes_{\bZ}\bA_f^p\isoarrow V^p(A_{0,s},A_s)$ of $\eta^p$ at $s$ denoted by the same symbol.
The homomorphism $V^p(f)\colon V^p(A_{0,s},A_s)\to V^p(A_{0,s},A'_s)$ is isometric due to the compatible $\lambda$ and $\lambda'$.
This leads to the isometry of $V^p(f)\circ\eta^p$.
The image $V^p(f)\circ\eta^p(\wh L^p)$ equals $T^p(A_0,A')$ since $\eta^p$ induces the isomorphism $(L\otimes_\bZ\bA_f^p)\otimes_{\bA_{F,f}^p}V^p(A_{0,s})\isoarrow V^p(A_s)$, through which $\wh L^p\otimes_{\wh{\cO_F}^p}T^p(A_{0,s})$ and $V^p(f)^{-1}(T^p(A'_s))$ coincide.

Next we focus on the fully-faithfulness.
The faithfulness is trivial.
For the other part, take two objects $(A_0,i_0,\lambda_0,\epsilon^p,A,i,\lambda,\eta^p)$ and $(A'_0,i'_0,\lambda'_0,\epsilon'^p,A',i',\lambda',\eta'^p)$ of $\cM_{K_{\wt G}}^{\mathrm{naive}}(S)$, and a morphism of $\cR_{K_{\wt G}}^{\mathrm{naive}}(S)$ from the first to the second.
Let $f\in\Hom_{\cO_F}(A,A')\otimes_\bZ \bZ_{(p)}$ be its constituent.
Then, using the level structures, we have $V^p(f)(T^p(A_s)) = T^p(A'_s)$, implying that $f$ is an actual morphism.
\end{proof}

The functors $\cM_{K_{\wt G}}^{\mathrm{naive}}$ and $\cR_{K_{\wt G}}^{\mathrm{naive}}$ are representable by $\cO_{E,(p)}$-schemes of finite type by a standard argument (\cf\cite[\S 5]{KotInt}).
These schemes are separated by the valuative criterion.
Besides, their generic fibers are $\Sh_{K_{\wt G}}(\wt G, h_{\wt G})$ by \cite[Theorem 4.4]{RSZExp}.

\subsection{Flat models}

When $K_{Z^\bQ}^p = Z^\bQ(\wh\bZ^p)$, we refer to \cite[Theorem 5.4]{RSZExp} for the definition of $\cM_{K_{\wt G}}$, except that it is a scheme over $\cO_{E,(\nu)}$ as in \cite[\S 4]{RSZInt}, and that in (d) of the theorem, we suppose that
$
    \{r_{\varphi_\psi}, r_{\ol{\varphi_\psi}}\} = \{1,n-1\}.
$
The latter is because we do not know if the local model in \cite{SmiPre} related to (d) is normal, affecting in turn the proof of the normality of our models.
The definition of \cite[Theorem 5.4]{RSZExp} uses $\cR_{K_{\wt G}}^{\mathrm{naive}}$, but we can describe it in terms of $\cM_{K_{\wt G}}^{\mathrm{naive}}$.
In general, we put $\cM_{K_{\wt G}} = \cM_{Z^\bQ(\wh{\bZ})K_G}\times_{\cM_0^{\fa,\xi}}\cM_0$.

Our main theorem is the following.

\begin{thm}\label{main}
The system of schemes $\cM_{K_{\wt G}}$ for varying $K_G^p$ and $K_{Z^\bQ}^p$ and the transition morphisms is a canonical integral model with respect to $(\wt\cG, \Mloc)$.
In particular, the system is canonically isomorphic to the system of models in \cite[4.2.1]{KPInt} with the corresponding levels.
\end{thm}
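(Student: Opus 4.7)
The plan is to verify one by one the three conditions of Definition \ref{candef} for the system $(\cM_{K_{\wt G}})_{K_{\wt G}^p}$ with respect to $(\wt\cG, \Mloc)$, as the introduction outlines (extension property in \S\ref{sec:Ext}, locally universal displays in \S\ref{sec:LocUniv}); the assertion that the system is canonically isomorphic to the one of \cite[4.2.1]{KPInt} will then follow from the uniqueness of canonical models, since by \cite[Theorem 8.2.1]{PapInt} the Kisin--Pappas system is itself canonical with respect to $(\wt\cG,\Mloc)$ via the Hodge embedding of \S\ref{sec:Def-Emb}.

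For condition (1), I would factor $\pi_{K'^p_{\wt G},K_{\wt G}^p}$ through variations in $K_{Z^\bQ}^p$ and in $K_G^p$ separately. On the auxiliary factor this is precisely Proposition \ref{auxiliary finet}. On the $G$-factor, the prime-to-$p$ level structure $\eta^p$ is a section of a quotient pro-etale sheaf $\ul{\HermIsom}((\wh L^p)_S,T^p(A_0,A))/K_G^p$, and shrinking $K_G^p$ gives finite etale torsor covers of the base; the fiber product $\cM_{K_{\wt G}} = \cM_{Z^\bQ(\wh\bZ)K_G}\times_{\cM_0^{\fa,\xi}}\cM_0$ preserves finite etaleness. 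Condition (\ref{extension}) is handled by the standard Neron--Ogg--Shafarevich argument: given a compatible family of $R[1/p]$-valued points, the compatibility of $\eta^p$ and $\epsilon^p$ at arbitrarily small $K_{\wt G}^p$ forces the prime-to-$p$ Tate modules of $A_0$ and $A$ to be unramified, so the abelian schemes extend to $R$, the $\cO_F$-actions and polarizations extend by the Neron mapping property, the Kottwitz and sign conditions persist as closed resp.\ locally constant conditions, and the extended level structures are obtained from the unramified Tate modules.

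The main obstacle is condition (\ref{strongdisplay}): for each $K_{\wt G}^p$, exhibiting a locally universal $(\wt\cG,\Mloc)$-display $\sD_{K_{\wt G}}$ on the $p$-adic formal completion of $\cM_{K_{\wt G}}$ associated with $\cL_{K_{\wt G}}$. I would build $\sD_{K_{\wt G}}$ through the strongly integral local Hodge embedding $\iota$ of \S\ref{sec:Def-Emb}: the universal $p$-divisible group $A_0[p^\infty]\times A[p^\infty]$ over the formal completion produces, via Zink's theory, a classical display for $\GL(\Lambda_{0,\bZ_p}\oplus\Lambda_{\bZ_p})$, and the $\cO_F$-actions, polarizations and Kottwitz condition then cut out a reduction of structure group to $\wt\cG$ with Hodge-filtration morphism $q$ landing in $\iota_*\Mloc$. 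The flatness of $\cM_{K_{\wt G}}$ emphasized in the introduction is what makes this reduction well-defined over the entire formal completion. Local universality at each closed point $x$ is then a Serre--Tate deformation-theoretic statement: the deformation functor of the polarized $\cO_F$-linear abelian scheme is formally smooth of the same relative dimension as the completion of $\Mloc\otimes W(k(x))$ at the image of $x$, and the universal display data furnishes a section rigid in the first order inducing an isomorphism between these completions. Finally, association with $\cL_{K_{\wt G}}$ and compatibility with the pointwise Dieudonn\'e displays $\sD_x$ are supplied by Theorem \ref{exist} applied to $A_0[p^\infty]\times A[p^\infty]$, once one identifies its generic-fiber $p$-adic Tate module with the $\iota$-pushout of $\cL_{K_{\wt G}}$; this identification is tautological from the description of $\cL_{K_{\wt G}}$ as the tower of covers indexed by $K_{\wt G,p}'$ and from the definition of the level at $p$ as the $\wt\cG(\bZ_p)$-stabilizer of the lattice $\Lambda_0\oplus\Lambda$. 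Compatibility of the displays with the transition morphisms $\pi_{K'^p_{\wt G}, K_{\wt G}^p}$ is automatic, since the whole construction is functorial in the universal abelian schemes.
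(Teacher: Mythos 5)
Your treatment of conditions (1) and (2) of Definition \ref{candef} is essentially the paper's: finite \'etaleness of transitions via Proposition \ref{auxiliary finet}, and a N\'eron--Ogg--Shafarevich argument for the extension property. The divergence, and the gap, lies in condition (3).

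You propose to build $\sD_{K_{\wt G}}$ directly from the universal $p$-divisible group $A_0[p^\infty]\times A[p^\infty]$, letting the PEL structures cut out a reduction of structure group from $\GL$ to $\wt\cG$ with Hodge filtration landing in $\Mloc$, and then to deduce local universality from a Serre--Tate deformation argument. Both halves have real gaps. The structure-group reduction over the entire formal completion is precisely the hard content of \cite[4.2.1]{KPInt}; merely naming the extra PEL structures does not supply such a reduction. Moreover, the claim that the deformation functor is ``formally smooth of the same relative dimension as the completion of $\Mloc\otimes W(k(x))$'' cannot be right as written: $\Mloc$ is in general singular, so the deformation ring is not formally smooth over $W(k(x))$. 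And local universality in the sense of \cite[Definition 4.5.10]{PapInt} is not a dimension match with $\Mloc$; it requires a section $s\in\cP(\hat{W}(R_x))$ that is \emph{rigid in the first order}, a point your proposal never addresses. The paper sidesteps all of this by going the other way: it first constructs a morphism $\cM_{K_{\wt G}}\to\sS_{K_{\wt G}}$ to the Kisin--Pappas model (mapping to the Siegel moduli $\sS_{K^\flat}$ via the product abelian scheme, \S\ref{sec:LocUniv-MorKP}), and then \emph{pulls back} the locally universal display that already exists on $\sS_{K_{\wt G}}$ by \cite[Theorem 8.2.1]{PapInt}. Local universality of the pullback reduces to showing the induced map $R_y\to R_x$ on strictly completed local rings is an isomorphism; this is Lemma \ref{climm} plus a dimension count. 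The forgetful morphism $\Def(x)\to\Def(z)$ on deformation functors (to the Siegel deformation functor) is shown to be a closed immersion by peeling off level structures, the sign condition, and the Kottwitz conditions, and invoking Serre--Tate together with \cite[Theorem 1.4.5.5]{CCOLift}, so $R_z\twoheadrightarrow R_x$; combined with $\dim R_x=\dim R_y$ from the local model diagrams of \cite[Theorem 4.2.7]{KPInt} and \cite[Theorem 5.4]{RSZExp}, one concludes. That closed-immersion lemma is the key ingredient your plan is missing.
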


The rest of this section concisely handles the fact that the models satisfy properties in Definition \ref{candef} other than (\ref{extension}) and (\ref{strongdisplay}).
The $\cO_{E,(\nu)}$-scheme $\cM_{K_{\wt G}}$ is flat with generic fiber $\Sh_{K_{\wt G}}(\wt G, h_{\wt G})$ (\cf \cite[Theorem 5.4]{RSZExp}).
It is separated of finite type since so is the naive model.
We claim the normality of $\cM_{K_{\wt G}}$.
The local model diagram in the proof of \cite[Theorem 5.4]{RSZExp} attributes this to the normality of local models.
Local models cited in the same proof are the Pappas--Zhu local models by those citations and \cite[\S 8.2.5(c)]{PZLoc}.
This implies that their base change over $\cO_{E_{\nu}}$ is normal by \cite[Theorem 9.1]{PZLoc} (\cf \cite[Proposition 9.2]{PZLoc}).
The finite etaleness of the transition morphisms is similar to Proposition \ref{auxiliary finet}.

\section{The extension property} \label{sec:Ext}

This section is about the extension property, that is, (\ref{extension}) of Definition \ref{candef}.
First we show the extension property for $\cM_{K_{\wt G}}^{\mathrm{naive}}$.
If $S$ is a locally noetherian $\cO_{E,(p)}$-scheme, then as a set, $\displaystyle\varprojlim_{K_{Z^\bQ}^p, K_G^p}\cM_{K_{\wt G}}^{\mathrm{naive}}(S)$ is the set of isomorphism classes of tuples $(A_0,i_0,\lambda_0,\epsilon^p,A,i,\lambda,\eta^p)$ of
\begin{itemize}
    \item an abelian scheme $A_0$ over $S$,
    \item $i_0\colon \cO_F\to \End_S(A_0)$ with the Kottwitz condition,
    \item an $\cO_F$-linear polarization $\lambda_0\colon A_0\to A_0^\vee$ of degree prime to $p$ such that $(A_0,i_0,\lambda_0)\in\cM_0^{\fa,\xi}(S)$,
    \item an $\wh{\cO_F}^p$-linear isomorphism $\epsilon^p\colon(\wh{\Lambda_0}^p)_S\to T^p(A_0)$ transferring $\psi_0$ to the Riemann form of $\lambda_0$ up to $(\wh\bZ^p)^\times$,
    \item $A$ is an abelian scheme over $S$,
    \item $i\colon\cO_F\to\End_S(A)$ with the other Kottwitz condition,
    \item a $\bQ$-polarization $\lambda\in\Hom_{\cO_F}(A,A^\vee)\otimes_{\bZ}\bZ_{(p)}$, and
    \item a hermitian isomorphism $\eta^p\colon (\wh L^p)_S\to T^p(A_0,A)$,
\end{itemize}
further satisfying two conditions in the definition of $\cM_{K_{\wt G}}^{\mathrm{naive}}$.

Let $R$ be a discrete valuation ring of mixed characteristic $(0,p)$ and let $(A_0,i_0,\lambda_0,\epsilon^p,A,i,\lambda,\eta^p)$ give an element of $\displaystyle\varprojlim_{K_{Z^\bQ}^p, K_G^p}\cM_{K_{\wt G}}^{\mathrm{naive}}\left(R[1/p]\right)$.
Then by the N\'eron--Ogg--Shafarevich criterion, $A_0$ uniquely extends to an abelian scheme $\wt{A_0}$ over $R$.
The data $i_0$ and $\lambda_0$ uniquely extends to data on $\wt{A_0}$ by, e.g., \cite[Lemma 1]{FalLift} and \cite[Proposition 2.14]{MilInt}.
For $\epsilon^p$, the finite etale group scheme $\wt{A_0}[N]$ for a positive integer $N$ prime to $p$ is constant as $A_0[N]$ is, so that $\epsilon^p$ also uniquely extends to a datum over $R$.
The rest of the argument is analogous except that we use $(\wh L^p)_S\otimes_{\wh{\cO_F}^p}T^p(A_0)\isoarrow T^p(A)$ induced by $\eta^p$.

The extension property for $\cM_{K_{\wt G}}$ holds by that for the naive model and the valuative criterion for the closed immersion $\cM_{K_{\wt G}}\to\cM_{K_{\wt G}}^{\mathrm{naive}}$.

\section{The locally universal display} \label{sec:LocUniv}

To prove (\ref{strongdisplay}) of Definition \ref{candef} for RSZ models, we first construct canonical morphisms from them to the Kisin--Pappas models as stated in Theorem \ref{main}.
Next, we pull back the $(\wt\cG, \Mloc)$-displays on the latter.
We are reduced to show the local universality of the new $(\wt\cG, \Mloc)$-displays.
We show this by comparing RSZ models with the full moduli of abelian schemes through the deformation theory.

\subsection{The morphisms to Kisin--Pappas models} \label{sec:LocUniv-MorKP}

We follow \cite[8.1.3]{PapInt} to recall the Kisin--Pappas models.
Put $K^\flat_p = \GSp(\bZ_p)$.
For a compact open subgroup $K^{\flat, p}$ of $\GSp (\bA_f^p)$, set $K^\flat = K^\flat_pK^{\flat, p}$.
By \cite[Lemma 2.1.2]{KisInt}, we can choose $K^{\flat, p}$ so that the Hodge embedding in \S \ref{sec:Def-Emb} induces an embedding $\Sh_{K_{\wt G}}(\wt G, X)\to \Sh_{K^\flat}(\GSp, S^\pm)_E$ over $E$.

Let $\sS_{K^\flat}$ be the quasi-projective $\bZ_{(p)}$-scheme representing the pseudo-functor which carries a locally noetherian $\bZ_{(p)}$-scheme $S$ to the groupoid of triples $(A,\lambda, \eta^p)$, where
\begin{itemize}
    \item $A$ is an abelian scheme over $S$ of relative dimension $[F:\bQ](n+1)$,
    \item $\lambda\in\Hom(A,A^\vee)\otimes_\bZ \bZ_p$ is a $\bQ$-polarization and
    \item $\eta^p$ is a section over $S$ of the pro-etale sheaf
    \[
        \ul{\Isom}(\Lambda_0\otimes_\bZ\bA_f^p\oplus\Lambda\otimes_\bZ\bA_f^p, V^p(A))/K^{\flat, p},
    \]
    the pro-etale sheaf of $\bA_f^p$-linear isomorphisms of the two sheaves transferring $\psi_0 + \psi$ to the Riemann form of $\lambda_0$ up to a constant in $\bA_f^{p,\times}$.
\end{itemize}
The morphisms from $(A,\lambda, \eta^p)$ to $(A',\lambda', \eta'^p)$ in $\sS_{K^\flat}(S)$ are prime-to-$p$ isogenies from $A$ to $A'$ carrying $\lambda$ to $\lambda'$ up to a constant in $\bZ_p^\times$ and preserving the level structures.

We explain the morphism $\cM_{K_{\wt G}}\to\sS_{K_{\wt G}}$.
Suppose $S$ is a connected locally noetherian $\cO_{E,(\nu)}$-scheme with a geometric point $s\to S$ and $(A_0,i_0,\lambda_0,\epsilon^p,A,i,\lambda,\eta^p)$ is an element of $\cM_{K_{\wt G}}(S)$.
Then, $\epsilon^p$ and $\eta^p$ is given by a $K_{Z^\bQ}^p$-class of $\wh{\cO_F}^p$-linear isomorphisms $\wh{\Lambda_0}^p\isoarrow T^p(A_{0,s})$ and $\wh L^p\isoarrow T^p(A_{0,s}, A_s)$.
These combine to give $\wh{\Lambda_0}^p\oplus \wh\Lambda^p\isoarrow T^p(A_{0,s}\times_s A_s)$, forming an element of $\sS_{K^\flat}(S)$ with $A_0\times_S A$ and $\lambda_0\times_S \lambda$.
This defines a forgetful morphism $\cM_{K_{\wt G}}\to\sS_{K_{\wt G}}$.

\subsection{The local universality}

Let $\cL_{K_{\wt G}}$ be the pro-etale $\wt\cG(\bZ_p)$-local system on $\Sh_{K_{\wt G}}(\wt G,X)$ given by the covers
\[
    (\Sh_{K'_pK_{\wt G}^p}(\wt G,X)\to\Sh_{\wt\cG(\bZ_p)K_{\wt G}^p}(\wt G,X))_{K'_p\subseteq\wt\cG(\bZ_p)}.
\]
Let $\sD_{K_{\wt G}}$ be the pull-back on $\cM_{K_{\wt G}}$ of the locally universal $(\wt\cG, \Mloc)$-display on $\sS_{K_{\wt G}}$ associated with $\cL_{K_{\wt G}}$.
This pullback is a $(\wt\cG, \Mloc)$-display associated with $\sD_{K_{\wt G}}$ that is compatible for different levels.

To show its local universality, we now discuss the deformation theory.
Set $\breve{E_\nu}$ to be the maximal unramified extension of $E_\nu$.
The symbol $\cC$ denotes the category of artinian local $\cO_{\breve{E_\nu}}$-algebras whose structure homomorphism is local and induces the equality of residue fields.
Take $x\in\cM_{K_{\wt G}}(\ol{\bF_p})$.
Let $z\in \sS_{K^\flat}(\ol{\bF_p})$ be its image.
$R_x$ (resp. $R_z$) denotes the strict completion of the local ring of $\sS_{K_{\wt G}}$ at $x$ (resp. $\sS_{K^\flat}\otimes_{\bZ_{(p)}}\cO_{E, (\nu)}$ at $z$).
Let $(\ol A_0,\ol i_0,\ol\lambda_0,\ol\epsilon^p,\ol A,\ol i,\ol\lambda, \ol\eta^p)$ be the object of $\cM_{K_{\wt G}}(\ol{\bF_p})$ in correspondence with $x$.
Let $(\ol A_0\times_{\bF_p}\ol A, \ol\lambda_0\times_{\bF_p}\ol\lambda, \ol{\zeta}^p)$ in $\sS_{K^\flat}(\ol{\bF_p})$ correspond with $z$.

The complete noetherian ring $R_x$ prorepresents the following set-valued deformation functor $\Def(x)$ from $\cC$.
For any object $S$ of $\cC$, the set $\Def(x)(S)$ comprises the isomorphism classes of the objects of $\cM_{K_{\wt G}}(S)$ reduced to $(\ol A_0,\ol i_0,\ol\lambda_0,\ol\epsilon^p,\ol A,\ol i,\ol\lambda, \ol\eta^p)$ over $\ol{\bF_p}$.
Likewise, $R_z$ prorepresents the set-valued deformation functor $\Def(z)$ from $\cC$ carrying an object $S$ of $\cC$ to the set of equivalence classes in $\sS_{K^\flat}(S)$ reduced to the class of $\ol A_0\times_{\bF_p}\ol A, \ol\lambda_0\times_{\bF_p}\ol\lambda, \ol{\zeta}^p$ over $\ol{\bF_p}$.

\begin{lem}\label{climm}
The forgetful morphism $\Def(x)\to\Def(z)$ is a closed immersion.
\end{lem}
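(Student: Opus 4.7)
The plan is to show that the induced morphism $R_z \to R_x$ on prorepresenting complete local noetherian $\cO_{\breve{E_\nu}}$-algebras is surjective; by Nakayama's lemma this is equivalent to injectivity of the tangent map $\Def(x)(\ol{\bF_p}[\epsilon]) \to \Def(z)(\ol{\bF_p}[\epsilon])$, and I would in fact prove the stronger statement that $\Def(x)(S) \to \Def(z)(S)$ is injective for every $S \in \cC$.

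Suppose $\xi$ and $\xi'$ in $\Def(x)(S)$ map to a common element of $\Def(z)(S)$. Unwinding the definitions produces a prime-to-$p$ quasi-isogeny $\phi \colon A_0 \times_S A \to A_0' \times_S A'$ that reduces to the identity on the special fiber, transforms $\lambda_0 \times \lambda$ into $u \cdot (\lambda_0' \times \lambda')$ for some $u \in \bZ_p^\times$, and respects the combined level structure. Since $S$ is artinian, the kernel of $\phi$ is a prime-to-$p$ \'etale finite group scheme whose reduction to the special fiber vanishes, so the kernel is trivial and $\phi$ is an isomorphism. The idempotent $e$ of $A_0 \times_S A$ projecting to $A_0$ and the analogous idempotent $e'$ of $A_0' \times_S A'$ are lifts of the same idempotent on the special fiber; by the rigidity of lifts of endomorphisms of abelian schemes, which stems from the injectivity $\End(B) \hookrightarrow \End(\ol B)$ coming from the torsion-freeness of $\Hom$, we have $\phi e \phi^{-1} = e'$. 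Hence $\phi$ respects the product decomposition and splits as $\phi = \phi_0 \times \phi_1$.

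The same rigidity principle makes $\phi_0$ and $\phi_1$ equivariant with respect to the actions $(i_0, i_0')$ and $(i, i')$ of $\cO_F$, and forces $\phi_0^*\lambda_0' = \lambda_0$ exactly as an element of $\Hom(A_0, A_0^\vee)$. Projecting the polarization identity $\phi^*(\lambda_0' \times \lambda') = u \cdot (\lambda_0 \times \lambda)$ to the $A_0$-factor then gives $\lambda_0 = u \lambda_0$ in $\Hom(A_0, A_0^\vee) \otimes \bZ_p$; since this module is torsion-free and $\lambda_0 \neq 0$, we conclude $u = 1$, and consequently $\phi_1^* \lambda' = \lambda$ in $\Hom(A, A^\vee) \otimes \bZ_{(p)}$. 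The combined level structure decomposes as $\epsilon^p \oplus \eta^p$ under the product decomposition, so $\phi_0$ and $\phi_1$ preserve $\epsilon^p$ and $\eta^p$ respectively, using also that pro-\'etale sheaves are rigid over artinian bases. Therefore $(\phi_0, \phi_1)$ defines an isomorphism in $\Def(x)(S)$, as required.

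The hardest step is reconciling the $\bZ_p^\times$-ambiguity in the preservation of polarizations in the Siegel moduli $\sS_{K^\flat}$ with the strict preservation required in the RSZ model $\cM_{K_{\wt G}}$. This is precisely where the rigidity of lifts of morphisms plays the crucial role: it pins down $\phi_0^* \lambda_0' = \lambda_0$ on the nose, so that the $\bZ_p$-torsion-freeness of $\Hom(A_0, A_0^\vee) \otimes \bZ_p$ forces the scalar $u$ to be trivial, upgrading an identification in $\Def(z)(S)$ to an identification in $\Def(x)(S)$.
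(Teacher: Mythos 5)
Your argument is correct and establishes the lemma, but it takes a genuinely different route from the paper. The paper factors $\Def(x)\to\Def(z)$ through a chain of explicit closed immersions: first into $\Def(x)^{\mathrm{naive}}$, then (after discarding level structures and the locally constant sign invariant) through the Serre--Tate equivalence to deformation functors of polarized $p$-divisible groups with $\cO_F$-action, then forgetting the $\cO_F$-action via \cite[Theorem 1.4.5.5]{CCOLift}, and finally embedding the product of deformation spaces into the deformation space of the product. You instead prove directly that $\Def(x)(S)\to\Def(z)(S)$ is injective for every artinian $S$, which (already for $S=\ol{\bF_p}[\epsilon]$) gives surjectivity of $R_z\to R_x$ by Nakayama. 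The engine of your argument is the injectivity of $\Hom_S(B,B')\to\Hom_k(\ol B,\ol B')$ for abelian schemes over a local artinian base: this simultaneously shows that the isomorphism $\phi$ coming from $\Def(z)$ is forced to be an actual isomorphism (its prime-to-$p$ kernel is \'etale with trivial special fiber), to respect the product idempotents, to be $\cO_F$-equivariant, and to preserve the polarizations on the nose rather than up to scalar --- exactly the point where a $\bZ_p^\times$-ambiguity must be killed. In effect you are reproving, at the level of abelian schemes over artinian rings, the same rigidity that the paper invokes via Serre--Tate and \cite{CCOLift} at the level of $p$-divisible groups. What your approach buys is a self-contained argument that avoids invoking the Serre--Tate equivalence explicitly; what the paper's approach buys is modularity, quoting standard closed-immersion statements rather than re-deriving them. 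One small point worth spelling out in your write-up: the closed conditions (e.g.\ refined spin, conditions on $\Ker\lambda_v$) cutting $\cM_{K_{\wt G}}$ out of $\cM_{K_{\wt G}}^{\mathrm{naive}}$ are harmless because injectivity of a composite is inherited from injectivity of the outer map, so it suffices to run your argument at the level of the naive model, as you implicitly do.
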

\begin{proof}
First, we have a closed immersion
\[
    \Def(x)\to\Def(x)^{\mathrm{naive}}
\]
to a deformation functor for $\cM_{K_{\wt G}}^{\mathrm{naive}}$ similarly defined to the functor $\Def(x)$.
Next, we may forget about the level structures both in $\Def(x)^{\mathrm{naive}}$ and $\Def(z)$ since they are described in terms of etale fundamental groups and stalks of etale sheaves, always admitting a unique deformation when the other data are deformed.
We may also forget the locally constant sign invariant in $\Def(x)^{\mathrm{naive}}$.
Then, forgetting the closed conditions on $\Ker\lambda_0$ and $\Ker\lambda_v$ and the Kottwitz conditions and using the Serre--Tate theorem yields a closed immersion
\[
    \Def(x)^{\mathrm{naive}}\to\Def(\ol A_0[p^\infty],\ol\lambda_0,\ol i_0)\times\Def(\ol A[p^\infty],\ol\lambda,\ol i),
\]
the target consisting of the deformation functors on $\cC$ of the data in the parentheses.
Moreover, we have a closed immersion \cite[Theorem 1.4.5.5]{CCOLift}
\[
    \Def(\ol A_0[p^\infty],\ol\lambda_0,\ol i_0)\times\Def(\ol A[p^\infty],\ol\lambda,\ol i)\to\Def(\ol A_0[p^\infty], \ol\lambda_0)\times\Def(\ol A[p^\infty], \ol\lambda).
\]
Furthermore, we have a closed immersion
\[
    \Def(\ol A_0[p^\infty], \ol\lambda_0)\times\Def(\ol A[p^\infty], \ol\lambda)\to \Def(\ol A_0[p^\infty]\times_{\bF_p}\ol A[p^\infty], \ol\lambda_0\times_{\bF_p}\ol\lambda).
\]
The latter is isomorphic to $\Def(z)$ again by the Serre--Tate theorem.
All these are composed to give the desired result.
\end{proof}

\begin{cor}
The $(\wt\cG, \Mloc)$-display $\sD_{K_{\wt G}}$ is locally universal.
\end{cor}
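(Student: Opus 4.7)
The plan is to produce a canonical morphism $f\colon\cM_{K_{\wt G}}\to\sS_{K_{\wt G}}^{KP}$ to the Kisin--Pappas model, pull back the locally universal $(\wt\cG,\Mloc)$-display it carries, and verify that the pullback remains locally universal by showing that $f$ is an isomorphism on strict completions at every closed geometric point of $\cM_{K_{\wt G}}$. The essential input will be Lemma \ref{climm}, which says that the Siegel moduli remembers enough to pin down the RSZ deformation.

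First I would construct $f$. By the construction of \cite[4.2.1]{KPInt}, $\sS_{K_{\wt G}}^{KP}$ is the normalization of the Zariski closure of $\Sh_{K_{\wt G}}(\wt G, h_{\wt G})$ inside $\sS_{K^\flat}\otimes_{\bZ_{(p)}}\cO_{E,(\nu)}$. Because $\cM_{K_{\wt G}}$ is normal, flat over $\cO_{E,(\nu)}$, and has generic fiber $\Sh_{K_{\wt G}}(\wt G, h_{\wt G})$, the universal property of normalization makes the forgetful morphism of §\ref{sec:LocUniv-MorKP} factor uniquely through such an $f$. Fix $x\in\cM_{K_{\wt G}}(\ol{\bF_p})$, set $\wt x = f(x)$, and let $z$ be its further image in the Siegel model; write $R_{\wt x}$ for the strict completion of $\sS_{K_{\wt G}}^{KP}$ at $\wt x$.

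Next I would show $R_{\wt x}\isoarrow R_x$. The scheme morphisms produce a commutative diagram of ring maps $R_z\to R_{\wt x}\to R_x$, whose composition agrees with the forgetful map $\Def(x)\to\Def(z)$ and is therefore surjective by Lemma \ref{climm}; in particular $R_{\wt x}\twoheadrightarrow R_x$. Since both models are normal, $R_x$ and $R_{\wt x}$ are complete normal noetherian local rings and thus integral domains. Flatness over $\cO_{E,(\nu)}$ combined with the common generic fiber forces both rings to have Krull dimension $\dim\Mloc$. A surjection of noetherian local domains of equal Krull dimension has height-zero, hence trivial, kernel; therefore $R_{\wt x}\isoarrow R_x$.

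Finally, by \cite[Theorem 8.2.1]{PapInt} the Kisin--Pappas system is canonical, so at $\wt x$ there is a section $s^{KP}$ of the $\wt\cG$-torsor over $\hat W(R_{\wt x})$ rigid in the first order such that $q^{KP}\circ (s^{KP}\otimes 1)$ identifies $R_{\wt x}$ with the completion of $\Mloc\otimes_{\bZ_p}W(\ol{\bF_p})$ at the image of the closed point. Transporting $s^{KP}$ along the isomorphism of the previous step produces a section $s$ of the pulled-back torsor over $\hat W(R_x)$; rigidity in the first order, being a condition on the induced map on cotangent spaces at the closed point, is preserved, and $q\circ(s\otimes 1)$ realizes the same identification, now with $R_x$, which is precisely local universality of $\sD_{K_{\wt G}}$ at $x$. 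The only nontrivial step is the equality $R_{\wt x}=R_x$, whose substance is Lemma \ref{climm}; the rest reduces to elementary commutative algebra of surjections between noetherian local domains.
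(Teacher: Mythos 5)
Your proof is correct and follows essentially the same route as the paper: construct the morphism to the Kisin--Pappas model, reduce to showing the induced map on strict completions is an isomorphism, get surjectivity from Lemma~\ref{climm}, and conclude by a dimension count. The only cosmetic difference is that you obtain the equality of Krull dimensions from flatness plus the common generic fiber, whereas the paper invokes the local model diagrams of \cite[Theorem 4.2.7]{KPInt} and \cite[Theorem 5.4]{RSZExp}; both are valid.
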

\begin{proof}
We just have to show that for the image $y\in\sS_{K_{\wt G}}(\ol{\bF_p})$ of $x$, the map $R_y\to R_x$ between the strict completions of the local rings is isomorphic due to the local universality for $\sS_{K_{\wt G}}$.
Showing that it is an isomorphism is further attributed to proving that $R_z$ surjects on $R_x$ since $R_x$ and $R_y$ has the same dimension by the local model diagrams in \cite[Theorem 4.2.7]{KPInt} and \cite[Theorem 5.4]{RSZExp}.
The proof is completed by Lemma \ref{climm}.
\end{proof}
Thus RSZ models form a canonical system.
The morphisms from RSZ models to Kisin--Pappas models are isomorphic just as in the proof of \cite[Theorem 7.1.7]{PapInt}.
This completes the proof of Theorem \ref{main}.

\bibliographystyle{amsplain}

\noindent
Yuta Nakayama\\
Graduate School of Mathematical Sciences, The University of Tokyo,
3-8-1 Komaba, Meguro-ku, Tokyo, 153-8914, Japan \\
nkym@ms.u-tokyo.ac.jp

\end{document}